\def \r{\mathbb R}
\def \({\langle}
\def \){\rangle}
\DeclareMathOperator{\grad}{grad}
\DeclareMathOperator{\sech}{sech}\DeclareMathOperator{\csch}{csch}
\DeclareMathOperator{\Vol}{Vol}
\newtheorem{theorem}{Theorem}[section]
\newtheorem{lemma}[theorem]{Lemma}
\newtheorem{proposition}[theorem]{Proposition}
\newtheorem{corollary}[theorem]{Corollary}
\theoremstyle{remark}
\newtheorem{remark}[theorem]{Remark}
\theoremstyle{definition}
\newtheorem{definition}{Definition}
\newtheorem{example}[theorem]{Example}
\title{Mean value property for nonharmonic functions}
\author{Tetiana Boiko, Oleg Karpenkov}
\date{18 September 2013}
\address{Tetiana Boiko\\
TU Graz\\
30/III, Steyrergasse\\
8010, Graz, Austria} \email{boiko@math.tugraz.at}
\address{Oleg Karpenkov\\
University of Liverpool\\
Mathematical Sciences Building\\
Liverpool L69 7ZL, United Kingdom
} \email{karpenk@liv.ac.uk}
\keywords{Mean value property, Laplacian, discrete Laplacian, homogeneous trees}
\begin{document}

\input epsf

\begin{abstract}
In this article we extend the mean value property for harmonic functions to the nonharmonic case.
In order to get the value of the function at the center of a sphere
one should integrate a certain Laplace operator power series over the sphere.
We write explicitly such series in the Euclidean case and in the case of infinite homogeneous trees.
\end{abstract}

\maketitle

\tableofcontents

\section*{Introduction}
In this article we extend the mean value property for harmonic functions to the case of nonharmonic functions.
Our goals are to study this problem in Euclidean case and in the case of infinite homogeneous trees.

\vspace{2mm}

{\noindent
{\bf Mean value property for harmonic function.}
In what follows we denote by $S^{d-1}(r)$ the $(d{-}1)$-dimensional sphere in the Euclidean space $\r^d$
with radius $r$ and center at the origin.
Let $\Vol(S^{d-1}(x))$ be its volume and let $d\mu$ be the standard surface volume measure on each of the spheres.
}

Recall the classical {\it mean value property} for a harmonic function $f$:
$$
f(0)=\frac{1}{\Vol(S^{d-1}(r))}\int\limits_{S^{d-1}(r)} fd\mu.
$$
See~\cite{Helms1975} for general reference to potential theory.

\vspace{2mm}

{\noindent
{\bf Mean value property for nonharmonic function in Euclidean space $\r^d$.}

In Section~\ref{section-1} we prove the following general formula for analytic functions $f$ under some natural
convergency conditions (see Theorem~A):
$$
f(0)=\frac{1}{\Vol(S^{d-1}(r))}\int\limits_{S^{d-1}(r)}\sum\limits_{i=0}^\infty \alpha_{i,d}r^{2i}\triangle^i fd\mu.
$$
The coefficients  $\alpha_{i,d}$ are generated as follows
$$
\sum\limits_{i=0}^\infty\alpha_{i,d}x^{2i}=\frac{(Ix/2)^{\frac{d-2}{2}}}{\Gamma\big(\frac{d}{2}\big)J_{\frac{d-2}{2}}(Ix)},
$$
where $J_q$ denotes the Bessel function of the first kind and $I=\sqrt{-1}$.
}

We would like to mention that for harmonic functions all $\triangle^if=0$ (for $i\ge 1$) and hence we get a classical mean value property.

\vspace{2mm}

{\noindent
{\bf Mean value property for nonharmonic function  for homogeneous trees.}
Harmonic functions on trees for the first time were introduced in 1972 by P.~Cartier in~\cite{Cartier1972}.
In Section~\ref{section-2} of this article we show the generalized version of Poisson-Martin integral representation
for holomorphic functions to the case of non-harmonic functions (under certain natural convergency conditions).
For a general theory of harmonic functions on graphs and, in particular, trees we refer to~\cite{Doob2001,Dynkin1969,Woess2009}.}

Consider a homogeneous tree of degree $q+1$ which we denote by $T_{q}$.
We prove the following formula (see Theorem~B and Corollary~\ref{main-tree-cor}):

$$
f(v)=\frac{q}{q+1}\int\limits_{\partial T_q}
\left[
\sum\limits_{i=0}^{\infty}\Bigg((q+1)^i\Big(\gamma_i(\infty)+q^\infty\gamma_i(-\infty)\Big)\triangle^i f(t)\Bigg)\right]_v
dt,
$$
where
\begin{equation}\label{eq1}
\gamma_i(n)=c_{i,i}n^i+\ldots+ c_{i,1}n+c_{i,0},
\end{equation}
whose collection of coefficients $c_{i,j}$ (for a fixed $i$) is the solution of the following linear system
\begin{equation}\label{systemA}
A
\left(
\begin{array}{c}
c_{i,i}\\
\vdots\\
c_{i,1}\\
c_{i,0}\\
\end{array}
\right)
=
\left(
\begin{array}{c}
0\\
\vdots\\
0\\
1\\
\end{array}
\right)
\end{equation}
where
$$
A=\left(
\begin{array}{cccc}
0&\ldots  &0&2\\
1^i(1+(-1)^i q^1)& \ldots  &1^{1}(1+(-1)^{1} q^1)&1^0(1+(-1)^{0} q^1)\\
2^i(1+(-1)^i q^2)& \ldots  &2^{1}(1+(-1)^{1} q^2)&2^0(1+(-1)^{0} q^2)\\
\vdots&\ddots& \vdots&\vdots\\
(i{-}1)^i(1+(-1)^i q^{i-1})& \ldots  &(i{-}1)^{1}(1+(-1)^{1} q^{i-1})&(i{-}1)^{0}(1+(-1)^{0} q^{i-1})\\
i^i(1+(-1)^i q^{i})& \ldots  &i^{1}(1+(-1)^{1} q^{i})&i^{0}(1+(-1)^{0} q^{i})\\
\end{array}
\right).
$$
Here we consider the integration in the following sense
$$
\int\limits_{\partial T_q}
\left[\sum\limits_{i=0}^{\infty}\lambda_i(\infty)\triangle^i f(t)\right]_v
dt
=
\lim\limits_{n\to \infty} \left(\sum\limits_{i=0}^{n}\lambda_i(n)\sum\limits_{w\in S_n(v)}\triangle^i f(w)\right),
$$
where $S_n(v)$ is the set of all vertices at distance $n$ to the vertex $v$.

\section{Generalized mean value property in $\r^n$}\label{section-1}

In this section we we show how to generalize the mean value property in $\r^n$ to the case of nonharmonic functions.
Without loss of generality we study the value at the origin and take the integrals
over the spheres centered at the origin. In Subsections~\ref{1Lapl} and~\ref{1Bessel}
we introduce some preliminary general notions and definitions. Further in Subsection~\ref{1gvp}
we formulate and prove the main results concerning the mean value property in $\r^n$.

\subsection{Operator on $\r^1$ associated to the $d$-dimensional Laplace operator}\label{1Lapl}
Consider the Laplace operator $\triangle$ on $\r^d$.
In polar coordinates one can write
$$
\triangle(f)=\triangle_r(f)+\frac{1}{r^2}\triangle_{S^{d-1}}f, \quad \hbox{where} \quad\triangle_r(f)=
\frac{1}{r^{d-1}}\frac{\partial}{\partial r}\Big(r^{d-1}\frac{\partial f}{\partial r}\Big),
$$
the radial part, and $\triangle_{S^{d-1}}$ is the Laplace--Beltrami operator on the $(d{-}1)$-sphere.
Let us associate to the Laplace operator $\triangle$ the following operator on a real line:
$$
\tilde \triangle_d(g)=\frac{\partial}{\partial x}\Big(x^{d-1}\frac{\partial }{\partial x}\Big(\frac{g(x)}{x^{d-1}}\Big)\Big).
$$

\begin{proposition}\label{sphere-line}
For an analytic function $f$ it holds
$$
\int\limits_{S^{d-1}(x)}\triangle f(v)d\mu=\tilde\triangle_d\left(\int_{S^{d-1}(x)}f(v)d\mu\right).
$$
\end{proposition}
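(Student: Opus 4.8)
The plan is to reduce both sides to a single-variable computation by pulling the sphere integral back to the fixed unit sphere. Writing a point of $S^{d-1}(x)$ as $v=x\omega$ with $\omega\in S^{d-1}(1)$, the surface measure transforms as $d\mu=x^{d-1}d\sigma(\omega)$, where $d\sigma$ is the measure on the unit sphere. Set $F(x)=\int_{S^{d-1}(x)}f\,d\mu=x^{d-1}\int_{S^{d-1}(1)}f(x\omega)\,d\sigma(\omega)$; the goal is to show that the left-hand side of the claimed identity equals $\tilde\triangle_d(F)$.

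First I would split $\triangle f$ using the polar formula into the radial part $\triangle_r f$ and the spherical part $\frac{1}{r^2}\triangle_{S^{d-1}}f$. The spherical term drops out upon integration: since $r=x$ is constant on $S^{d-1}(x)$ and the sphere is a closed manifold without boundary, $\int_{S^{d-1}(x)}\triangle_{S^{d-1}}f\,d\mu=0$ by the divergence theorem. Hence $\int_{S^{d-1}(x)}\triangle f\,d\mu=\int_{S^{d-1}(x)}\triangle_r f\,d\mu$, and it remains to identify this radial integral with $\tilde\triangle_d(F)$.

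The heart of the argument is the observation that differentiating in the radius parameter $x$ of a function evaluated along the ray $x\omega$ reproduces the radial derivative: for any smooth $h$ on $\r^d$ one has $\frac{\partial}{\partial x}h(x\omega)=\frac{\partial h}{\partial r}(x\omega)$. Applying this to $h(v)=|v|^{d-1}\frac{\partial f}{\partial r}(v)$ and differentiating under the integral sign (justified by analyticity of $f$), I would compute $\tilde\triangle_d(F)=\frac{\partial}{\partial x}\big(x^{d-1}\frac{\partial}{\partial x}\int_{S^{d-1}(1)}f(x\omega)\,d\sigma\big)$, since $F(x)/x^{d-1}=\int_{S^{d-1}(1)}f(x\omega)\,d\sigma$. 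Carrying the derivatives inside turns the integrand into $\frac{\partial}{\partial r}\big(r^{d-1}\frac{\partial f}{\partial r}\big)\big|_{r=x}$, and restoring the factor $d\mu=x^{d-1}d\sigma$ gives exactly $\int_{S^{d-1}(x)}\triangle_r f\,d\mu$.

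The main obstacle will be the bookkeeping of the $x^{d-1}$ weights and the clean identification of $\partial/\partial x$ with $\partial/\partial r$; once the normalization $F(x)/x^{d-1}=\int_{S^{d-1}(1)}f(x\omega)\,d\sigma$ is in place, the operator $\tilde\triangle_d$ is precisely designed to strip off and reinsert these weights so that the two derivatives in $x$ assemble the radial Laplacian. The only analytic point to check is the interchange of differentiation and integration, which holds because $f$ is analytic and the integration is over a compact sphere.
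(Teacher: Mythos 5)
Your proposal is correct and follows essentially the same route as the paper: both kill the Laplace--Beltrami term by integrating over the closed sphere (the paper via the Green identity $\int h\triangle_{S^{d-1}}f\,d\mu=-\int\langle\grad h,\grad f\rangle\,d\mu$ with $h=1$, you via the divergence theorem, which is the same fact), and both then pull the integral back to the unit sphere so that $\tilde\triangle_d$ applied to $x^{d-1}\int_{S^{d-1}(1)}f(x\omega)\,d\sigma$ reassembles the radial Laplacian after differentiating under the integral sign.
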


\begin{proof}
First, notice that for Laplace--Beltrami operator $\triangle_{S^{d-1}}$ it holds
$$
\int\limits_{S^{d-1}(x)}h(v)\triangle_{S^{d-1}} f(v)d\mu=-\int\limits_{S^{d-1}(x)}\langle \grad h(v),\grad f(v) \rangle d\mu,
$$
where the function $\grad$ is the gradient operator on the tangent space to the sphere $S^{d-1}(x)$, and $\langle v,w\rangle$
is the scalar product of $v$ and $w$.
Therefore, substituting $h=1$ we have
$$
\int\limits_{S^{d-1}(x)}\triangle_{S^{d-1}}f(v) {d\mu}=0.
$$

Second, we make the following transformations.
\begin{align*}
\tilde\triangle_d\Big(\int_{S^{d-1}(x)}f(v)d\mu\Big)&
{
=
\tilde\triangle_d\Big(x^{d-1}\int_{S^{d-1}(1)}f(xv)d\mu\Big)
=\frac{\partial}{\partial x}\left(x^{d-1}\frac{\partial }{\partial x} \Big(\int_{S^{d-1}(1)} f(xv)d\mu\Big)\right)
}
\\
&{=}
\int_{S^{d-1}(1)}\frac{\partial}{\partial x}\Big(x^{d-1}\frac{\partial}{\partial x} f(xv)\Big)d\mu
=\int_{S^{d-1}(x)}\frac{1}{x^d}\frac{\partial}{\partial x}\Big(x^{d-1}\frac{\partial}{\partial x} f(xv)\Big)d\mu
\\
&{=}\int_{S^{d-1}(x)}\triangle_r f(v)d\mu=\int_{S^{d-1}(x)}\triangle f(v)d\mu.
\end{align*}
This concludes the proof of Proposition~\ref{sphere-line}.
\end{proof}

Iteratively applying Proposition~\ref{sphere-line} we get the following corollary.
\begin{corollary}\label{sphere-line-n}
For an analytic function $f$ on $\r^d$ and a nonnegative integer $n$ it holds
$$
\int\limits_{S^{d-1}(x)}\triangle^n f(v)d\mu=\tilde\triangle_d^n\left(\int_{S^{d-1}(x)}f(v)d\mu\right).
$$
\qed
\end{corollary}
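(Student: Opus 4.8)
The plan is to prove this by induction on $n$, using Proposition~\ref{sphere-line} as the single-step engine. The base case $n=0$ is immediate: the operator $\tilde\triangle_d^0$ is the identity and $\triangle^0 f = f$, so both sides reduce to $\int_{S^{d-1}(x)} f(v)\,d\mu$. The case $n=1$ is exactly the statement of Proposition~\ref{sphere-line}, so there is nothing further to check there.

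For the inductive step I would assume the identity holds for $n-1$ and deduce it for $n$. First I write $\triangle^n f = \triangle\bigl(\triangle^{n-1} f\bigr)$ and apply Proposition~\ref{sphere-line} with the function $\triangle^{n-1} f$ playing the role of $f$; this gives
$$
\int\limits_{S^{d-1}(x)}\triangle^n f(v)\,d\mu
=\tilde\triangle_d\left(\int_{S^{d-1}(x)}\triangle^{n-1} f(v)\,d\mu\right).
$$
Then I invoke the inductive hypothesis on the inner integral, replacing it with $\tilde\triangle_d^{n-1}\bigl(\int_{S^{d-1}(x)} f(v)\,d\mu\bigr)$. Composing the outer $\tilde\triangle_d$ with this $\tilde\triangle_d^{n-1}$ produces $\tilde\triangle_d^n$ applied to $\int_{S^{d-1}(x)} f(v)\,d\mu$, which is exactly the right-hand side. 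This closes the induction.

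The only point that needs a moment of care, and which I expect to be the sole genuine obstacle, is the analyticity hypothesis built into Proposition~\ref{sphere-line}: to apply it to $\triangle^{n-1} f$ I must know that this function is itself analytic. This follows because $\triangle$ carries analytic functions to analytic functions, so a short secondary induction shows that every iterate $\triangle^k f$ is analytic whenever $f$ is. Once this is recorded, the chain of equalities above is valid and the corollary follows.
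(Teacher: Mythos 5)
Your proof is correct and is exactly the argument the paper intends: the paper dispenses with the corollary in one line ("Iteratively applying Proposition~\ref{sphere-line}"), and your induction simply writes out that iteration, with the added (reasonable) remark that each $\triangle^{k}f$ remains analytic so the proposition can be reapplied.
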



\subsection{Bessel functions and some important generating functions}\label{1Bessel}
Let $J_p$ denote Bessel functions of the first kind.
Recall that the power series decomposition of $J_p$ at $x=0$ is written as
$$
J_{p}(x)=\sum\limits_{k=0}^\infty \frac{(-1)^k(x/2)^{p+2k}}{k!\Gamma(p+k+1)}.
$$

Let us define two collections of coefficients $\alpha_{i,d}$ and $\beta_{i,d}$.
Recall that
$$
\sum\limits_{i=0}^\infty\alpha_{i,d}x^{2i}=\frac{(Ix/2)^{\frac{d-2}{2}}}{\Gamma\big(\frac{d}{2}\big)J_{\frac{d-2}{2}}(Ix)}.
$$
\begin{remark}
In case if $d=1$ and $d=3$ we have the following
$$
\sech x=\sum\limits_{i=0}^\infty\alpha_{i,1}x^{2i}  \qquad \hbox{and} \qquad x\csch x=\sum\limits_{i=0}^\infty\alpha_{i,3}x^{2i}.
$$
\end{remark}

Set the coefficients $\beta_{i,d}$ as follows
$$
\sum\limits_{i=0}^\infty\beta_{i,d}x^{2i}=\frac{J_{\frac{d-2}{2}}(Ix)}{(Ix/2)^{\frac{d-2}{2}}},
$$

\begin{proposition}\label{alpha-beta}
Let $k$ be a nonnegative integer and $d$ be a positive integer. Then it holds

\vspace{1mm}

$($i$)$ $\displaystyle \beta_{k,d}=\frac{1}{4^k k! \Gamma(p+k+1)}$;

\vspace{1mm}

$($ii$)$ $
\sum\limits_{i=0}^k \alpha_{i,d}\beta_{k-i,d}=
\left\{
\begin{array}{ll}
\frac{1}{\Gamma(d/2)}, &\hbox{if $k=0$;}\\
0,       &\hbox{if $k\ge 1$.}
\end{array}
\right.
$
\vspace{1mm}
\end{proposition}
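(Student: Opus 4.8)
The plan is to prove both parts by direct manipulation of the two displayed generating functions as power series in $x^2$, relying only on the series expansion of $J_p$ recalled just above the statement.

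For part (i) I would set $p=\frac{d-2}{2}$ and substitute $x\mapsto Ix$ into the defining series $J_{p}(x)=\sum_{k=0}^\infty \frac{(-1)^k(x/2)^{p+2k}}{k!\,\Gamma(p+k+1)}$. Dividing by $(Ix/2)^{p}$ cancels the $p$-th power and leaves $\frac{J_{p}(Ix)}{(Ix/2)^{p}}=\sum_{k=0}^\infty \frac{(-1)^k(Ix/2)^{2k}}{k!\,\Gamma(p+k+1)}$. The only substantive step is the sign bookkeeping: since $(Ix/2)^{2k}=(I^2)^k(x/2)^{2k}=(-1)^k x^{2k}/4^k$, the two factors of $(-1)^k$ cancel, giving $\sum_{k=0}^\infty \frac{x^{2k}}{4^k k!\,\Gamma(p+k+1)}$. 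Matching the coefficient of $x^{2k}$ against $\sum_i \beta_{i,d}x^{2i}$ then yields $\beta_{k,d}=\frac{1}{4^k k!\,\Gamma(p+k+1)}$, as claimed.

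For part (ii) the key observation is that the two generating functions are mutually reciprocal up to the constant factor $\frac{1}{\Gamma(d/2)}$: multiplying the defining series for $\sum_i\alpha_{i,d}x^{2i}$ and $\sum_j\beta_{j,d}x^{2j}$, the Bessel factors $J_{\frac{d-2}{2}}(Ix)$ and the powers $(Ix/2)^{\frac{d-2}{2}}$ cancel, leaving exactly $\frac{1}{\Gamma(d/2)}$. Expanding the left-hand side as a Cauchy product, the coefficient of $x^{2k}$ is precisely $\sum_{i=0}^k \alpha_{i,d}\beta_{k-i,d}$. Comparing coefficients with the constant $\frac{1}{\Gamma(d/2)}$ on the right shows this sum equals $\frac{1}{\Gamma(d/2)}$ for $k=0$ and vanishes for every $k\ge 1$, which is the assertion.

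I do not expect a serious obstacle, as both parts are power-series bookkeeping; the points needing a little care are the sign tracking in the substitution $x\mapsto Ix$ for part (i), and, if one wishes to argue analytically rather than purely formally, the justification that the product of the two series converges and rearranges as a Cauchy product near $x=0$. The latter follows from the analyticity of $J_{\frac{d-2}{2}}(Ix)/(Ix/2)^{\frac{d-2}{2}}$ together with its nonvanishing at $x=0$ (its value there is $\frac{1}{\Gamma(d/2)}\neq 0$), so that $\sum_i\alpha_{i,d}x^{2i}$ is the reciprocal of an analytic function that is nonzero at the origin and hence is itself analytic on a neighborhood of $0$.
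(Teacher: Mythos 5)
Your proposal is correct and follows essentially the same route as the paper: part (i) by substituting $x\mapsto Ix$ into the series for $J_p$ and reading off coefficients, and part (ii) by noting the two generating functions multiply to the constant $\frac{1}{\Gamma(d/2)}$ and comparing Cauchy-product coefficients. Your version is in fact more careful than the paper's (which leaves the sign bookkeeping and the convergence/rearrangement justification implicit, and contains a typo writing $\alpha_{i,1},\beta_{i,1}$ for $\alpha_{i,d},\beta_{i,d}$).
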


\begin{proof}
The first statement follows directly from the power series decomposition for the function
$J_{\frac{d-2}{2}}(Ix)$.
The second statement holds, since by the definition of generating functions
$$
\sum\limits_{i=0}^\infty\alpha_{i,1}x^{2i}
\sum\limits_{i=0}^\infty\beta_{i,1}x^{2i}
=
\frac{(Ix/2)^{\frac{d-2}{2}}}{\Gamma\big(\frac{d}{2}\big)J_{\frac{d-2}{2}}(Ix)}
\cdot \frac{J_{\frac{d-2}{2}}(Ix)}{(Ix/2)^{\frac{d-2}{2}}}
=\frac{1}{\Gamma(d/2)}.
$$
\end{proof}

\subsection{Generalized mean value property}\label{1gvp}

We start with several definitions.

\begin{definition}
For an arbitrary nonnegative integer $d$ a smooth functions $f$ on $\r^n$, and a smooth function $g$ on $\r^1$ set
$$
\begin{array}{l}
\displaystyle
T_d(f,r)(v)=\sum\limits_{i=0}^\infty \alpha_{i,d}r^{2i}\triangle^i f(v),\\
\displaystyle
\tilde T_dg(x)=\sum\limits_{i=0}^\infty \alpha_{i,d}x^{2i}\tilde\triangle_d^ig(x),\
\end{array}
$$
where the generating function for the coefficients $\alpha_{i,d}$ is as above.
\end{definition}

For an arbitrary function $f:\r^d \to \r$ we denote by $\tilde f:\r \to \r$ the function defined as follows.
For positive $x$ we set
$$
\tilde f(x)=\frac{1}{\Vol(S^{d-1}(x))}\int_{S^{d-1}(x)}f(v)d\mu.
$$
For negative $x$ we put $f(x)=f(-x)$. Finally we define
$$
\tilde f(0)=\lim\limits_{x\to 0}\left(\frac{1}{\Vol(S^{d-1}(x))}\int_{S^{d-1}(x)}f(v)d\mu\right)=f(0).
$$

\begin{definition}
We say that a function $f$ is {\it spherically $a$-analytic at $0$} for some $a>0$
if the Taylor series for $\tilde f$ at the origin converges to $\tilde f$ on the segment $[-a,a]$.
\end{definition}

{
\noindent
{\bf Theorem A.}
{\it Consider $0<r<a$. Let $f:\r^d\to \r$ be a function that is spherically $a$-analytic at $0$.
Then we have
$$
f(0)=\frac{1}{\Vol(S^{d-1}(r))}\int\limits_{S^{d-1}(r)}T_d(f)d\mu.
$$
}
}

\begin{example}
Let a function $\varphi$ on $\r^3$ satisfy the Poisson's equation
$$
\triangle \varphi=f
$$
for some harmonic function $f$. Then it holds
$$
\varphi(0)=\frac{1}{4\pi}\int\limits_{S^{2}(1)}\Big(\varphi(x)-\frac{1}{6}\triangle \varphi(x)\Big)d\mu.
$$
\end{example}

We start the proof of Theorem~A with the following lemma.

\begin{lemma}\label{real-T-lemma}
Let $k$ be a nonnegative integer. Then
$$
\tilde T_d (x^{2k+d-1})=
\left\{
\begin{array}{ll}
x^{d-1}, &\hbox{if $k=0$;}\\
0,       &\hbox{if $n\ge d$.}
\end{array}
\right.
$$
\end{lemma}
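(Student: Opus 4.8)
The plan is to understand the action of the one-dimensional operator $\tilde\triangle_d$ on the monomials $x^{2k+d-1}$, and then to feed the result into the series defining $\tilde T_d$ and invoke Proposition~\ref{alpha-beta}. (I read the second case of the statement as $k\ge 1$; the ``$n\ge d$'' appears to be a typo.) First I would compute $\tilde\triangle_d(x^{2k+d-1})$ straight from the definition: dividing by $x^{d-1}$ gives $x^{2k}$, differentiating yields $2kx^{2k-1}$, multiplying by $x^{d-1}$ gives $2kx^{2k+d-2}$, and a final differentiation produces
$$
\tilde\triangle_d\big(x^{2k+d-1}\big)=2k(2k+d-2)\,x^{2(k-1)+d-1}.
$$
Thus $\tilde\triangle_d$ lowers the index $k$ by one and multiplies by the scalar $2k(2k+d-2)$.

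The crucial observation is that this scalar is exactly the ratio $\beta_{k-1,d}/\beta_{k,d}$. Indeed, from part~(i) of Proposition~\ref{alpha-beta} with $p=\tfrac{d-2}{2}$ one computes $\beta_{k-1,d}/\beta_{k,d}=4k(p+k)=2k(2k+d-2)$. Iterating $i$ times (for $0\le i\le k$) the scalar factors telescope, so that
$$
\tilde\triangle_d^{\,i}\big(x^{2k+d-1}\big)
=\frac{\beta_{k-1,d}}{\beta_{k,d}}\cdot\frac{\beta_{k-2,d}}{\beta_{k-1,d}}\cdots\frac{\beta_{k-i,d}}{\beta_{k-i+1,d}}\;x^{2(k-i)+d-1}
=\frac{\beta_{k-i,d}}{\beta_{k,d}}\,x^{2(k-i)+d-1}.
$$
Finally, since $\tilde\triangle_d(x^{d-1})=0$ (dividing by $x^{d-1}$ gives the constant $1$, whose derivative vanishes), the iterate $\tilde\triangle_d^{\,i}(x^{2k+d-1})$ is zero for every $i>k$.

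It then remains to substitute this into the definition $\tilde T_dg=\sum_i \alpha_{i,d}x^{2i}\tilde\triangle_d^{\,i}g$. The point is that every nonzero term carries the same power of $x$: the factor $x^{2i}$ times $x^{2(k-i)+d-1}$ equals $x^{2k+d-1}$ regardless of $i$. Hence this power factors out and
$$
\tilde T_d\big(x^{2k+d-1}\big)=\frac{x^{2k+d-1}}{\beta_{k,d}}\sum_{i=0}^k\alpha_{i,d}\,\beta_{k-i,d}.
$$
Now I would apply part~(ii) of Proposition~\ref{alpha-beta}: for $k\ge 1$ the convolution sum vanishes and we get $0$; for $k=0$ the sum equals $1/\Gamma(d/2)$, while $\beta_{0,d}=1/\Gamma(d/2)$ as well, so the two reciprocals cancel and we are left with $x^{d-1}$. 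The only real subtlety is spotting the telescoping in the second paragraph — once the scalar multiplier is identified with a ratio of consecutive $\beta$'s, everything collapses precisely onto the convolution identity of Proposition~\ref{alpha-beta}, and the remainder is bookkeeping.
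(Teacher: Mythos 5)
Your proof is correct and follows essentially the same route as the paper: compute $\tilde\triangle_d$ on monomials, iterate to get $\tilde\triangle_d^i(x^{2k+d-1})=\frac{\beta_{k-i,d}}{\beta_{k,d}}x^{2(k-i)+d-1}$ (the paper writes this same quantity explicitly as $4^i\frac{k!}{(k-i)!}\frac{\Gamma(k+d/2)}{\Gamma(k-i+d/2)}$), and then reduce to the convolution identity of Proposition~\ref{alpha-beta}(ii). Your reading of the second case as $k\ge 1$ is also the intended one.
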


\begin{proof}
First, observe the following
$$
\tilde\triangle _dx^n=(n-d+1)(n-1)x^{n-2}.
$$
Therefore,
$$
\tilde\triangle^i _dx^{2k+d-1}=4^{i} \frac{k!}{(k-i)!}\frac{\Gamma(k+\frac{d}{2})}{\Gamma(k-i+\frac{d}{2})}x^{n-2i}.
$$
In particular, this means that for $i>k$ we have $\tilde \triangle_d^i(x^{2k+d-1})=0$.
Hence we get
\begin{align*}
\tilde T_d(x^{2k+d-1})&=\sum\limits_{i=0}^\infty \alpha_{i,d}x^{2i}4^{i} \frac{k!}{(k-i)!}\frac{\Gamma(k+\frac{d}{2})}{\Gamma(k-i+\frac{d}{2})}x^{n-2i}\\
&
=4^k k!\Gamma\Big(k+\frac{d}{2}\Big)x^{2k+d-1}\sum\limits_{i=0}^k \alpha_{i,d}\frac{1}{4^{k-i}(k-i)!\Gamma(k-i+\frac{d}{2})}\\
&
=4^k k!\Gamma\Big(k+\frac{d}{2}\Big)x^{2k+d-1}\sum\limits_{i=0}^k \alpha_{i,d}\beta_{k-i,d}\\
&
=
\left\{
\begin{array}{ll}
x^{d-1}, &\hbox{if $k=0$;}\\
0,       &\hbox{if $k\ge 0$.}
\end{array}
\right.
\end{align*}
The last two equalities follows from Proposition~\ref{alpha-beta}(i) and  Proposition~\ref{alpha-beta}(ii) respectively.
\end{proof}

\begin{corollary}\label{tilda-relation}
Consider an even analytic function $f$ whose Taylor series taken at $0$ converges on the segment $[-a,a]$.
Let also $x$ satisfy $0<x<a$. Then
$$
\frac{\tilde T_d\big(x^{d-1}f(x)\big)}{x^{d-1}}=f(0).
$$
\end{corollary}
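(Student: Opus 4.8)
The plan is to reduce the statement to Lemma~\ref{real-T-lemma} by expanding $f$ in its Taylor series and applying the operator $\tilde T_d$ term by term. Since $f$ is even and analytic with Taylor series converging on $[-a,a]$, I would write $f(x)=\sum_{k=0}^\infty a_k x^{2k}$ with $a_0=f(0)$, so that $x^{d-1}f(x)=\sum_{k=0}^\infty a_k x^{2k+d-1}$.

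Next, using linearity of $\tilde T_d$ (it is assembled from the linear operator $\tilde\triangle_d$ and multiplication by the scalars $\alpha_{i,d}x^{2i}$), I would apply it termwise to obtain
$$
\tilde T_d\big(x^{d-1}f(x)\big)=\sum_{k=0}^\infty a_k\,\tilde T_d\big(x^{2k+d-1}\big).
$$
By Lemma~\ref{real-T-lemma}, $\tilde T_d(x^{2k+d-1})$ equals $x^{d-1}$ when $k=0$ and vanishes for every $k\ge 1$. Hence only the $k=0$ summand survives and the series collapses to $a_0 x^{d-1}$. Dividing by $x^{d-1}$ then yields $a_0=f(0)$, which is exactly the assertion.

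The main obstacle is justifying the termwise application of $\tilde T_d$ to the infinite Taylor series: since $\tilde T_d$ is itself an infinite series in powers of $\tilde\triangle_d$, the expression $\tilde T_d(x^{d-1}f(x))$ is genuinely a double sum over the Bessel-type index $i$ and the Taylor index $k$, and one must interchange these summations. I would control this using the computation inside the proof of Lemma~\ref{real-T-lemma}, namely $\tilde\triangle_d^i(x^{2k+d-1})=0$ for $i>k$; this makes the double sum lower-triangular in the pair $(i,k)$, so that for each fixed $k$ only finitely many $i$ contribute. The hypothesis $0<x<a$ together with the convergence of the Taylor series of $f$ on $[-a,a]$ (which is what the spherical $a$-analyticity condition supplies through $\tilde f$) should then furnish absolute convergence of the rearranged double series, legitimising the interchange and the termwise evaluation. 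Once the interchange is granted, the collapse to a single surviving term is immediate from the lemma.
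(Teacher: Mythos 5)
Your argument is exactly the paper's proof: expand $f(x)=\sum_k a_k x^{2k}$, push $\tilde T_d$ through the sum, and invoke Lemma~\ref{real-T-lemma} so that only the $k=0$ term survives, the paper likewise appealing to the convergence of the Taylor series to justify the interchange. Your extra remark on the lower-triangular structure of the double sum (via $\tilde\triangle_d^i(x^{2k+d-1})=0$ for $i>k$) is a slightly more detailed justification of the same step, not a different route.
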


{\noindent
{\it Remark.} In fact, if $f$ is not even then a more general statement holds
$$
f(0)=\frac{\tilde T_d\big(x^{d-1}f(x)\big)+\tilde T_d\big(x^{d-1}f(-x)\big)}{2x^{d-1}}.
$$
}

\begin{proof}
Let $f$ be an even function, i.e.,
$$
f(x)=\sum\limits_{i=0}^\infty c_ix^{2i}.
$$
Then from Lemma~\ref{real-T-lemma} we have
$$
\frac{\tilde T_d\big(x^{d-1}f(x)\big)}{x^{d-1}}=
\frac{\tilde T_d\big(\sum_{i=0}^\infty c_ix^{2i+d-1}\big)}{x^{d-1}}=
\frac{\big(\sum_{i=0}^\infty c_i\tilde T_d(x^{2i+d-1})\big)}{x^{d-1}}=
\frac{c_0x^{d-1}}{x^{d-1}}=c_0=f(0).
$$
We demand the convergence of Taylor series in order to exchange the sum operation with $\tilde T_d$ in the second equality.
\end{proof}

{
\noindent
{\it Proof of Theorem~A.}
By Corollary~\ref{sphere-line-n} and by the definition of $\tilde f$ we have
$$
\int\limits_{S^{d-1}(x)}\triangle^n f(v)d\mu=\tilde\triangle_d^n\left(\int_{S^{d-1}(x)}f(v)d\mu\right)=
\Vol(S^{d-1}(1))\tilde\triangle_d^n \big(x^{d-1}\tilde f(x)\big).
$$
}

Since $f$ is spherically $a$-analytic, the function $\tilde f$ satisfies all the conditions of Corollary~\ref{tilda-relation}.
Applying Corollary~\ref{tilda-relation} we get
\begin{align*}
f(0)&=\tilde f(0)=\frac{\tilde T_d\big(x^{d-1} \tilde f(x)\big)}{x^{d-1}}=
\frac{1}{x^{d-1}\Vol(S^{d-1}(1))}\int\limits_{S^{d-1}(x)}T_d(f)d\mu\\
&=
\frac{1}{\Vol(S^{d-1}(x))}\int\limits_{S^{d-1}(x)}T_d(f)d\mu.
\end{align*}
This concludes the proof of Theorem~A. \qed


\section{Horocyclic formula for homogeneous trees}\label{section-2}

In this section we study the situation in the discrete case of homogeneous trees.
We start in Subsection~\ref{2n&d} with necessary notions and definitions.
Further in Subsection~\ref{2main} we formulate the statements regarding the generalization of the
Poisson-Martin integral representation theorem.
In Subsection~\ref{2relations} we study some necessary tools that are further used in the proofs of the main result.
We conclude the proofs in Subsection~\ref{2conclude}.

\subsection{Notions and definitions}\label{2n&d}
Consider a homogeneous tree $T_{q}$ (i.e., every vertex of such tree has $q+1$ neighbors) and denote its Martin boundary by $\partial T_{q}$.
If $v$ and $w$ are connected by an edge we write $v\sim w$.

\subsubsection{Laplace operator}
In this section we consider the standard Laplace operator on the space of all functions on $T_q$, which is defined as
$$
\triangle f(v) =\frac{\sum\limits_{w\sim v} f(w)}{q+1}-f(v).
$$
The composition of $i\ge 1$ Laplace operators we denote by $\triangle^i$. Set also $\triangle^0$ the identity operator.

\begin{remark}
Similarly one might consider majority of weighted Laplace operators. The statements of this section has a straightforward generalization
to arbitrary locally finite graphs. For simplicity reasons we restrict ourselves entirely to homogeneous trees.
\end{remark}

\subsubsection{Maximal cones and horocycles}
We start with the definition of maximal proper cones.

\begin{definition}
Consider two vertices $v,w\in T_{q}$ connected by an edge $e$. The maximal connected component
of $T_{q}\setminus e$ containing $v$ is called the {\it maximal proper cone with vertex at $v$ $($with respect to w$)$}. We denote it by $C^{v-w}$.
\end{definition}

The {\it distance} between two vertices $v,w\in T_{q}$ is the minimal number of edges
needed to reach the vertex $w$ starting from the vertex $v$.
For an arbitrary nonnegative integer $r$ and an arbitrary vertex $v$ we denote by $S_r(v)$ the set of all vertices at distance $r$ to $v$, we call such
set the {\it circle of radius $r$ with center $v$}. Note that $S_r(v)$ contains exactly $(q+1)q^{r-1}$ points.

\begin{definition}
Let $C^{v-w}$ be a maximal proper cone of $T_q$ and let $n$ be a nonnegative integer. The set
$$
C^{v-w}_n=C_{v-w}\cap S_n(v)
$$
is called the {\it horocycle of radius $n$ with center at $v$ $($with respect to $w$$)$}.
\end{definition}

\subsubsection{Integral series}
For an arbitrary function $f:T_q\to \r$ we write
$$
f(C^{v-w}_n)=\frac{1}{q^n}\sum\limits_{u\in C^{v-w}_n} f(u).
$$
%

\begin{definition}
In what follows we consider the {\it horocyclic integrals} defined by the following expression:
$$
\int\limits_{\partial C^{v-w}}
\left[\sum\limits_{i=0}^{\infty}\lambda_i(\infty)\triangle^i f(t)\right]
dt
=
\lim\limits_{n\to \infty} \left(\sum\limits_{i=0}^{n}\lambda_i(n)\triangle^i f(C^{v-w}_{n})\right),
$$
where $f$ is a function on the tree, $\lambda_i$ are arbitrary functions on the set of positive integers.
Respectively we write
$$
\int\limits_{\partial T_q}
\left[\sum\limits_{i=0}^{\infty}\lambda_i(\infty)\triangle^i f(t)\right]_v
dt
=
\lim\limits_{n\to \infty} \left(\sum\limits_{i=0}^{n}\lambda_i(n)\sum\limits_{u\in S^n(v)}\frac{\triangle^i f(u)}{q^n}\right).
$$
Here we specify by an index $v$ that the series are taken with respect to the vertex $v$, since now it is not reconstructed
from the integration domain.
\end{definition}

For instance,
$$
\int\limits_{\partial C^{v-w}}
[2^\infty(1+(-\infty)^3)f(t)]
dt
=
\lim\limits_{n\to \infty} \Big(2^n(1-n^3)f(C^{v-w}_{n+1})\Big)
=
\lim\limits_{n\to \infty} \Big(2^n(1-n^3)\sum\limits_{u \in C^{v-w}_{n}}\frac{f(u)}{q^n}\Big).
$$

\vspace{2mm}

{
\noindent
{\it Remark.}
Notice that the limit operation is not always commute with the sum operation.
To illustrate this we mention, that the expression from the limit exists for every holomorphic function even if the integral at Martin boundary diverges
(see Theorem~\ref{holom}).
So the notion of integral series extends the notion of integration of functions at Martin boundary.
}

\subsection{Horocyclic formula}\label{2main}
In this subsection we formulate the mean value property for certain nonharmonic functions.

\subsubsection{Horocyclic integrals for horosummable functions}

We start with the following definition.
\begin{definition}
We say that a functions $f$ is {\it $C^{v-w}$-horosummable} if
$$
\lim\limits_{n\to \infty} \big(q^nf(C^{v-w}_{2n})\big)=
\lim\limits_{n\to \infty} \Big(\sum\limits_{u\in C^{v-w}_{2n}} \frac{f(u)}{q^n}\Big)=0.
$$
\end{definition}

{
\noindent
{\bf Theorem B.}
{\it
Consider two vertices $v,w\in T_q$ connected by an edge, and let
$f$ be a $C^{v-w}$-horosumable function.
Then
$$
f(v)=\int\limits_{\partial C^{v-w}}
\left[
\sum\limits_{i=0}^{\infty}\Bigg((q+1)^i\Big(\gamma_i(\infty)+q^\infty\gamma_i(-\infty)\Big)\triangle^i f(t)\Bigg)\right]
dt,
$$
where $\gamma_i$ are polynomials as below $($see~$($\ref{eq1}$)$$)$ whose coefficients are the solutions of System~$($\ref{systemA}$)$.
In addition, the condition that the horocyclic integral in the right part of the equation converges to $f(v)$
is equivalent to the condition that $f$ is $C^{v-w}$-horosumable.
}
}

\vspace{2mm}

We prove this theorem later in Subsection~\ref{2conclude}.

Note that it would be interesting to relate the coefficients at terms $\triangle^i(f)$ with discretizations of Bessel functions.

\begin{example}
Let us check Theorem~B for the function $\chi_v$ that is zero everywhere except for the point $v$ and $\chi_v(v)=1$.
We have
$$
\triangle^i\chi_v(C_n^{v-w})=\frac{1}{q^n}\sum\limits_{u\in C^{v-w}_n} f(u)=
\left\{
\begin{array}{ll}
0, &\hbox{if $i<n$;}\\
\frac{1}{(q+1)^n},       &\hbox{if $i=n$.}
\end{array}
\right.
$$
(notice that $C^{v-w}_n$ contains exactly $q^n$ vertices).
Therefore,
$$
\begin{array}{l}
\displaystyle
\int\limits_{\partial C^{v-w}}
\left[
\sum\limits_{i=0}^{\infty}\Bigg((q+1)^i\Big(\gamma_i(\infty)+q^\infty\gamma_i(-\infty)\Big)\triangle^i \chi_v(t)\Bigg)\right]
dt
\\
\displaystyle\qquad \qquad=
\lim\limits_{n\to\infty}a_{n,n}\triangle^n\chi_v(C^{v-w}_n)=
\lim\limits_{n\to\infty}(q+1)^n\frac{1}{(q+1)^n}=1=\chi_v(v).
\end{array}
$$
It is clear from this example that it is not always possible to exchange the sum operator and the limit operator.
For the function $\chi_v$ we have
$$
\sum\limits_{i=0}^{\infty}
\lim\limits_{n\to \infty}
\Bigg((q+1)^i\Big(\gamma_i(n)+q^n\gamma_i(-n)\Big)\triangle^i \chi_v(C_n^{v-w})\Bigg)=
\sum\limits_{i=0}^{\infty}0=0\ne 1=\chi_v(v).
$$
\end{example}

Let us write a weaker version of Theorem~B for the integration over all the Martin boundary.

\begin{corollary}\label{main-tree-cor}
Consider a vertex $v\in T_q$, and let
$f$ be a $C^{v-w}$-horosumable function for all vertices $w$ adjacent to $v$.
Then
$$
f(v)=\frac{q}{q+1}\int\limits_{\partial T_q}
\left[
\sum\limits_{i=0}^{\infty}\Bigg((q+1)^i\Big(\gamma_i(\infty)+q^\infty\gamma_i(-\infty)\Big)\triangle^i f(t)\Bigg)\right]_v
dt.
$$
\end{corollary}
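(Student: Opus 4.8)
The plan is to obtain the Corollary as a direct consequence of Theorem~B, by splitting the circle $S_n(v)$ into the horocycles belonging to the $q+1$ maximal proper cones based at $v$. Denote by $w_1,\dots,w_{q+1}$ the neighbors of $v$ and by $C^{v-w_j}$ the associated maximal proper cones. Since $f$ is assumed $C^{v-w_j}$-horosummable for every neighbor $w_j$, Theorem~B applies to each cone separately and gives
$$
\lim_{n\to\infty}\ \sum_{i=0}^{n}(q+1)^i\big(\gamma_i(n)+q^n\gamma_i(-n)\big)\,\triangle^i f\big(C^{v-w_j}_n\big)=f(v),\qquad j=1,\dots,q+1.
$$
It then suffices to express the integral over $\partial T_q$ as a fixed multiple of the sum of these $q+1$ horocyclic integrals.

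The key combinatorial observation is a multiplicity count. For $n\ge 1$ each vertex $u\in S_n(v)$ determines a unique geodesic from $v$ whose initial edge points to exactly one neighbor $w_{j(u)}$; therefore $u\in C^{v-w_k}$ for precisely the $q$ indices $k\neq j(u)$. Consequently every vertex of $S_n(v)$ is counted exactly $q$ times across the horocycles $C^{v-w_1}_n,\dots,C^{v-w_{q+1}}_n$, which yields
$$
\sum_{j=1}^{q+1}\ \sum_{u\in C^{v-w_j}_n}\triangle^i f(u)=q\sum_{u\in S_n(v)}\triangle^i f(u).
$$
Dividing by $q^n$ and recalling $\triangle^i f(C^{v-w_j}_n)=q^{-n}\sum_{u\in C^{v-w_j}_n}\triangle^i f(u)$, the inner sum appearing in the $\partial T_q$-integral rewrites as
$$
\frac{1}{q^n}\sum_{u\in S_n(v)}\triangle^i f(u)=\frac{1}{q}\sum_{j=1}^{q+1}\triangle^i f\big(C^{v-w_j}_n\big).
$$

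Substituting this identity into the definition of the $\partial T_q$-integral and pulling the finite sum over $j$ outside the limit, one finds that the right-hand side of the Corollary equals
$$
\frac{q}{q+1}\cdot\frac{1}{q}\sum_{j=1}^{q+1}\ \lim_{n\to\infty}\sum_{i=0}^{n}(q+1)^i\big(\gamma_i(n)+q^n\gamma_i(-n)\big)\triangle^i f\big(C^{v-w_j}_n\big)=\frac{q}{q+1}\cdot\frac{q+1}{q}\,f(v)=f(v),
$$
where each of the $q+1$ inner limits is evaluated by Theorem~B. The one step demanding care is the interchange of $\lim_{n\to\infty}$ with the summation over $j$: this is permissible here only because that sum is finite and each summand converges by Theorem~B, in sharp contrast to the interchange with the infinite sum over $i$, which fails in general, as the $\chi_v$ example illustrates. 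The degeneration of the multiplicity count at $n=0$ (where $v$ lies in all $q+1$ cones) is harmless, since only the behavior as $n\to\infty$ enters the horocyclic integral.
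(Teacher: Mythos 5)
Your proof is correct and follows essentially the same route as the paper: sum the conclusion of Theorem~B over the $q+1$ maximal proper cones at $v$ and use the fact that each vertex of $S_n(v)$ (for $n\ge 1$) lies in exactly $q$ of them, which produces the factor $\frac{q}{q+1}$. Your write-up merely makes explicit two points the paper leaves implicit, namely the legitimacy of interchanging the limit with the finite sum over the $q+1$ cones and the harmless degeneration of the multiplicity count at $n=0$.
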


\begin{proof}
Let us sum up the expression obtained in Theorem~B for all maximal proper cones with vertex at $v$.
From one hand there are exactly $q{+}1$ such horocycles so the sum equals to $(q+1)f(v)$.
From the other hand each point of the Martin boundary was integrated $q$ times. Therefore, we get the constant $\frac{q}{q+1}$
in the statement of the corollary.
\end{proof}

{\noindent
{\it Remark.}
Note that it is possible to write similar series for arbitrary locally-finite trees, although the formulas for the coefficients
would be more complicated.
}

\subsubsection{Horocycle formula for harmonic functions}
We conclude this subsection with the following more general statement for harmonic functions.

\begin{corollary}\label{holom}
Consider an arbitrary harmonic function $h$ on a homogeneous tree $T_q$.
Let $v$ be a vertex of $T_q$ and $G_v$ be one of the corresponding horocyclic parts.
Then the following holds:
$$
h(v)=\int\limits_{\partial C^{v-w}}[h(t)]dt+
\int\limits_{\partial C^{v-w}}\bigg[q^\infty \Big(h(t)-\int_{\partial C^{v-w}}[h(t)]dt\Big)\bigg]dt.
$$
\qed
\end{corollary}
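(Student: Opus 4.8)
The plan is to bypass Theorem~B entirely, since its hypothesis of $C^{v-w}$-horosummability excludes many harmonic functions (for instance constants), and instead to exploit harmonicity directly through a recurrence for the horocyclic sums. Write $\Sigma_n=\sum_{u\in C^{v-w}_n}h(u)=q^n h(C^{v-w}_n)$. First I would convert the pointwise harmonicity $\triangle h=0$, i.e.\ $(q+1)h(u)=\sum_{w'\sim u}h(w')$, into a relation between consecutive horocycles. Summing this identity over all $u\in C^{v-w}_n$ with $n\ge 1$ and using the parent--child structure of the cone (each vertex of $C^{v-w}_{n-1}$ is the parent of exactly $q$ vertices of $C^{v-w}_n$, while the children of the vertices of $C^{v-w}_n$ are precisely the vertices of $C^{v-w}_{n+1}$), the parent contributions assemble into $q\Sigma_{n-1}$ and the child contributions into $\Sigma_{n+1}$, which gives
$$
\Sigma_{n+1}=(q+1)\Sigma_n-q\Sigma_{n-1},\qquad n\ge 1.
$$

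Next I would solve this linear recurrence. Its characteristic polynomial $x^2-(q+1)x+q=(x-1)(x-q)$ has roots $1$ and $q$, so there exist constants $A,B$ with $\Sigma_n=A+Bq^n$ for every $n\ge 0$; the two constants are fixed by the initial data $\Sigma_0=h(v)$ and $\Sigma_1$, and the recurrence then propagates this closed form to all $n$. Consequently $h(C^{v-w}_n)=\Sigma_n/q^n=Aq^{-n}+B$, and I would emphasize that both $\lim_{n\to\infty}h(C^{v-w}_n)=B$ and $\lim_{n\to\infty}q^n\big(h(C^{v-w}_n)-B\big)=A$ exist with no further assumptions. This is precisely why the statement holds for an \emph{arbitrary} harmonic function and not merely a horosummable one.

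Finally I would unwind the two horocyclic integrals against the definitions of Subsection~\ref{2n&d}. The first is $\int_{\partial C^{v-w}}[h(t)]\,dt=\lim_{n\to\infty}h(C^{v-w}_n)=B$. Calling this value $B$, the integrand of the second integral is $q^\infty$ applied to $h-B$, so, using $(h-B)(C^{v-w}_n)=h(C^{v-w}_n)-B$ (because $C^{v-w}_n$ has exactly $q^n$ points), one gets $\int_{\partial C^{v-w}}[q^\infty(h(t)-B)]\,dt=\lim_{n\to\infty}q^n\big(h(C^{v-w}_n)-B\big)=A$. Adding the two integrals yields $B+A=\Sigma_0=h(v)$, which is the claimed identity.

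The main obstacle is the bookkeeping in the summation step: one must justify carefully that summing harmonicity over $C^{v-w}_n$ collapses the neighbor sums into the clean two-term recurrence, paying particular attention to the base case $n=1$ (where every parent is the vertex $v$ itself) so that the closed form $\Sigma_n=A+Bq^n$ is valid all the way down to $n=0$. Once the recurrence and its closed-form solution are in hand, the rest is a routine evaluation of a constant-coefficient recurrence together with a direct reading of the horocyclic integral notation.
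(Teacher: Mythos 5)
Your proof is correct, and it is worth noting that the paper itself supplies no argument for Corollary~\ref{holom} (it is stated with a \qed), so you are filling a genuine gap rather than reproducing a proof. The intended route in the paper is presumably to specialize Lemma~\ref{expression} to a harmonic $h$, where all terms with $\triangle^k$, $k\ge 1$, vanish and the identity collapses to $h(v)+q^nh(C^{v-w}_{2n})=(1+q^n)h(C^{v-w}_n)$; but even along that route one still needs the closed form $h(C^{v-w}_n)=Aq^{-n}+B$ to pass to the limit and recover the two horocyclic integrals of the statement, so your two-term recurrence $\Sigma_{n+1}=(q+1)\Sigma_n-q\Sigma_{n-1}$ is not a detour --- it is the essential content, obtained directly from pointwise harmonicity rather than through the Laurent-polynomial machinery ($D_n=\sum_k a_{n,k}S_k$) of Subsection~\ref{2relations}. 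Your recurrence is in fact exactly the statement $\triangle h(C^{v-w}_n)=0$ after clearing denominators, so the two approaches agree; yours is more elementary and, importantly, makes transparent why the corollary needs no horosummability hypothesis (the quantity $q^n\big(h(C^{v-w}_n)-B\big)$ is the \emph{constant} $A$, so both horocyclic integrals exist unconditionally), which is precisely the point of the remark following the corollary. Two small caveats: the argument tacitly assumes $q\ne 1$ so that the characteristic roots $1$ and $q$ are distinct (for $q=1$ the general solution is $A+Bn$ and the statement itself degenerates, so the paper clearly intends $q\ge 2$); and your bookkeeping for the base case $n=1$ (each of the $q$ vertices of $C^{v-w}_1$ has parent $v$, since $w$ lies outside the cone) is exactly the point that needs the cone structure and you handle it correctly.
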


{
\noindent
{\it Remark.}
Suppose that $h$ is integrable on $\partial C^{v-w}$ with respect to the probability measure on the Martin boundary.
Then this integral coincides with
$$
\int\limits_{\partial C^{v-w}}[h(t)]dt.
$$
In case if $h$ is not integrable with respect to probability measure,
the horocyclic integral nevertheless exists. In some sense horocyclic integrability is a conditional integrability
with respect to integration over probability measure.
Horocyclic integral exists for every harmonic function $h$ and for every cone $C^{v-w}$.
}

\subsection{Relations on special Laurent polynomial}\label{2relations}
In this subsection we prove some supplementary statements.
For every integer $n$ we denote
$$
D_n(x)=x^n+\frac{q^n}{x^n}.
$$
Note that $D_0(x)=x^0+\frac{q^0}{x^0}$.

For every nonnegative integer we set
$$
S_n(x)=\frac{(x-1)^n(x-q)^n}{(q+1)^nx^n}.
$$

\vspace{2mm}

We have the following recurrent relation for the defined above Laurent polynomials.
\begin{proposition}\label{recursion}
For every integer $n$ we have
$$
S_1D_n=\frac{D_{n+1}-(q+1)D_n+qD_{n-1}}{q+1}.
$$
\end{proposition}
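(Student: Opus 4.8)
The plan is to verify the identity by a direct symbolic computation, treating everything as Laurent polynomials in $x$ over the field $\q(q)$. First I would rewrite $S_1$ in a form convenient for multiplication. Since $S_1(x)=\frac{(x-1)(x-q)}{(q+1)x}$, expanding the numerator gives $x^2-(q+1)x+q$, so that
$$
S_1(x)=\frac{1}{q+1}\Big(x-(q+1)+\frac{q}{x}\Big).
$$
This three-term shape is exactly what matches the three-term right-hand side.

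Next I would multiply this expression by $D_n(x)=x^n+\frac{q^n}{x^n}$ and expand the six resulting monomials. The key step is to regroup them into three pairs, each of which reassembles into one of the $D_k$. Explicitly, the pair $x\cdot x^n$ and $\frac{q}{x}\cdot\frac{q^n}{x^n}$ combines to $x^{n+1}+\frac{q^{n+1}}{x^{n+1}}=D_{n+1}$; the pair $-(q+1)x^n$ and $-(q+1)\frac{q^n}{x^n}$ gives $-(q+1)D_n$; and the remaining ``mixing'' pair $\frac{q}{x}\cdot x^n$ and $x\cdot\frac{q^n}{x^n}$ equals $qx^{n-1}+\frac{q^n}{x^{n-1}}=q\,D_{n-1}$. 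Summing the three pairs and dividing by $q+1$ yields the claimed formula.

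Since the whole argument is a formal manipulation of Laurent monomials, it is valid verbatim for every integer $n$, including $n=0$ and negative $n$, so no separate case analysis is needed. I do not expect any genuine obstacle here: the only thing to watch is the bookkeeping of exponents so that the cross terms pair up correctly, i.e.\ that the mixing terms $\frac{q}{x}\cdot x^n$ and $x\cdot\frac{q^n}{x^n}$ land in $D_{n-1}$ rather than $D_{n+1}$. It is perhaps worth remarking that the right-hand side is, up to the factor $q+1$, a weighted second difference of the sequence $(D_n)$, which is the algebraic shadow of the discrete Laplacian appearing elsewhere in this section; this is why one should expect such a clean recursion, and it also foreshadows how $S_1$ will interact with $\triangle$ in the later proofs.
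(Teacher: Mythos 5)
Your proposal is correct and follows essentially the same route as the paper: both expand the product $S_1D_n$ into six Laurent monomials and regroup them into the three combinations $D_{n+1}$, $-(q+1)D_n$, and $qD_{n-1}$. Your preliminary rewriting of $S_1$ as $\frac{1}{q+1}\big(x-(q+1)+\frac{q}{x}\big)$ is a minor presentational tidying of the same computation.
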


\begin{proof}
For every integer $n$ (including $n=-1,0,1$) it holds
\begin{align*}
S_1D_n&=\Big(\frac{(x-1)(x-q)}{(q+1)x}\Big)\Big(x^n+\frac{q^n}{x^n}\Big)\\
&=\frac{x^{n+1}}{q+1}-x^n+\frac{q}{q+1}x^{n-1}+
\frac{q^n}{(q+1)x^{n-1}}-\frac{q^n}{x^n}+\frac{q^{n+1}}{(q+1)x^{n+1}}\\
&=\frac{1}{q+1}\Big(x^{n+1}+\frac{q^{n+1}}{x^{n+1}}\Big)-
\Big(x^n+\frac{q^n}{x^n}\Big)+
\frac{q}{q+1}\Big(x^{n-1}+\frac{q^{n-1}}{x^{n-1}}\Big)\\
&=\frac{D_{n+1}-(q+1)D_n+qD_{n-1}}{q+1}.
\end{align*}
\end{proof}

The following proposition is straightforward.
\begin{proposition}\label{uniqueness_SD}
For every integer $n$ there exists a unique decomposition
$$
D_n=\sum\limits_{i=0}^n a_{n,i}S_i.
$$
\qed
\end{proposition}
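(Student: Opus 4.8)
\emph{Proof proposal.} The plan is to recast the statement as a fact about a finite-dimensional space of Laurent polynomials, where both existence and uniqueness fall out of a dimension count. Throughout I treat the case $n\ge 0$ (for $n<0$ one reduces to this case using $D_{-n}=q^{-n}D_n$, which follows at once from the definition of $D_n$).

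First I would record the two structural features shared by $D_n$ and the $S_i$. On one hand, each $S_i$ is a Laurent polynomial supported in degrees $-i,\dots,i$, and a direct expansion of $\tfrac{(x-1)^i(x-q)^i}{(q+1)^i x^i}$ shows its top-degree term is $(q+1)^{-i}x^{i}$; in particular $S_0,\dots,S_n$ all lie in the space of Laurent polynomials supported in degrees $-n,\dots,n$, and $D_n=x^n+q^nx^{-n}$ does as well. On the other hand, all of these are invariant under the involution $x\mapsto q/x$: for $D_n$ this is immediate, and for $S_i$ one checks that substituting $q/x$ turns $(x-1)$, $(x-q)$, $x^i$ into $(q-x)/x$, $q(1-x)/x$, $q^i/x^{i}$, and after cancelling the factors $q^i$ and the signs $(-1)^{2i}$ one recovers $S_i(x)$. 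Thus all the relevant objects live in the subspace $V_n$ of Laurent polynomials $P=\sum_{k=-n}^{n}c_kx^k$ satisfying $P(q/x)=P(x)$.

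Next I would compute $\dim V_n$. The condition $P(q/x)=P(x)$ is equivalent, by comparing coefficients, to $c_{-k}=q^{k}c_{k}$ for all $k$, so an element of $V_n$ is freely and uniquely determined by the data $c_0,c_1,\dots,c_n$; hence $\dim V_n=n+1$. Since $S_0,\dots,S_n$ have pairwise distinct top degrees $0,1,\dots,n$ with nonzero leading coefficients, they are linearly independent, and being $n+1$ independent vectors in the $(n{+}1)$-dimensional space $V_n$ they form a basis. As $D_n\in V_n$, it therefore has one and only one expansion $D_n=\sum_{i=0}^{n}a_{n,i}S_i$, which is exactly the assertion.

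The computations here are all routine, so the only real content is the recognition of the common symmetry $x\mapsto q/x$; once that is in place the counting is automatic, and I expect no genuine obstacle. As an alternative that avoids the symmetry entirely, one can argue by induction using the recurrence just established in Proposition~\ref{recursion}: combined with the elementary identity $S_1S_i=S_{i+1}$ (so that $S_i=S_1^{\,i}$), the relation $D_{n+1}=(q+1)S_1D_n+(q+1)D_n-qD_{n-1}$ rewrites $D_{n+1}$ as a combination of $S_0,\dots,S_{n+1}$ whenever $D_n$ and $D_{n-1}$ already have such expansions, with base cases $D_0=2S_0$ and $D_1=(q+1)(S_0+S_1)$; uniqueness again follows from the linear independence of the $S_i$.
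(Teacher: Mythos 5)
Your proof is correct. Note, however, that the paper offers no proof at all here: Proposition~\ref{uniqueness_SD} is simply declared ``straightforward'' and stamped with a \qed, so there is nothing to match your argument against line by line. What you supply is a complete and valid justification, and you correctly isolate the one non-obvious ingredient: linear independence of $S_0,\dots,S_n$ is immediate from their distinct top degrees (leading term $x^i/(q+1)^i$), but \emph{existence} of the expansion requires knowing that $D_n$ lies in their span, which your symmetry $P(x)\mapsto P(q/x)$ plus the dimension count $\dim V_n=n+1$ delivers cleanly. I verified the key computations: $S_i(q/x)=S_i(x)$, the coefficient condition $c_{-k}=q^kc_k$, and the base cases $D_0=2S_0$, $D_1=(q+1)(S_0+S_1)$ all check out. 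Your alternative inductive route via Proposition~\ref{recursion} and $S_i=S_1^i$ is arguably closer in spirit to what the authors had in mind, since it reuses the recursion they have just established and is the same mechanism that later produces the recurrence for the $a_{n,i}$ in Corollary~\ref{cor_rec}; the symmetry argument buys a cleaner conceptual reason (a basis of an eigenspace of the involution $x\mapsto q/x$) at the cost of introducing a structure the paper never makes explicit. One cosmetic point: for negative $n$ the statement's upper summation limit should be read as $|n|$, as your reduction $D_{-n}=q^{-n}D_n$ implicitly assumes.
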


Now we are interested in the coefficients $a_{n,i}$.
The next statement follows directly from Proposition~\ref{recursion}.

\begin{corollary}\label{cor_rec}
For every positive integer $i$ and every integer $n$ it holds
$$
a_{n,i-1}=\frac{a_{n+1,i}-(q+1)a_{n,i}+qa_{n-1,i}}{q+1}.
$$
Additionally in the case $i=0$ it holds
$$
0=a_{n+1,0}-(q+1)a_{n,0}+qa_{n-1,0}.
$$
\end{corollary}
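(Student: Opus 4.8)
The plan is to apply the recursion of Proposition~\ref{recursion} to the unique expansion of $D_n$ furnished by Proposition~\ref{uniqueness_SD}, and then to compare the coefficients of the two resulting expressions for $S_1D_n$ in the family $\{S_i\}$. The observation that makes everything fit together is that the $S_i$ are simply the powers of $S_1$: since
$$
S_n(x)=\frac{(x-1)^n(x-q)^n}{(q+1)^nx^n}=\left(\frac{(x-1)(x-q)}{(q+1)x}\right)^{\!n}=S_1(x)^n,
$$
we have $S_1S_i=S_{i+1}$ for every $i$. Hence multiplying the decomposition $D_n=\sum_{i=0}^n a_{n,i}S_i$ by $S_1$ merely shifts the indices:
$$
S_1D_n=\sum\limits_{i=0}^n a_{n,i}S_{i+1}=\sum\limits_{i=1}^{n+1}a_{n,i-1}S_i.
$$

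First I would recompute $S_1D_n$ the other way, through Proposition~\ref{recursion}. Substituting the expansions of $D_{n+1}$, $D_n$ and $D_{n-1}$ into $S_1D_n=\frac{1}{q+1}\big(D_{n+1}-(q+1)D_n+qD_{n-1}\big)$ gives
$$
S_1D_n=\sum\limits_{i=0}^{n+1}\frac{a_{n+1,i}-(q+1)a_{n,i}+qa_{n-1,i}}{q+1}\,S_i,
$$
with the convention that $a_{m,i}=0$ whenever $i$ lies outside the range $0\le i\le m$. Equating the two expressions for $S_1D_n$ and invoking the uniqueness part of Proposition~\ref{uniqueness_SD} lets me match coefficients term by term.

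Reading off the coefficient of $S_i$ then yields the claim: for $i\ge 1$ the shifted expansion contributes $a_{n,i-1}$, which must equal $\frac{a_{n+1,i}-(q+1)a_{n,i}+qa_{n-1,i}}{q+1}$, i.e.\ the first asserted identity; for $i=0$ the shifted expansion contributes nothing, which gives the second identity $0=a_{n+1,0}-(q+1)a_{n,0}+qa_{n-1,0}$. Since the paper already signals that this ``follows directly,'' I expect no genuine obstacle, only the need for care in the index bookkeeping—fixing the convention $a_{m,i}=0$ for out-of-range $i$ (and, if $n$ is allowed to be negative as stated, reducing to the nonnegative case via $D_{-m}=q^{-m}D_m$)—together with a clean appeal to the fact that the $S_i$ do form a basis, so that coefficient comparison is legitimate, which is precisely the content of Proposition~\ref{uniqueness_SD}.
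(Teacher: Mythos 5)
Your proposal is correct and follows essentially the same route as the paper: multiply the unique decomposition $D_n=\sum_i a_{n,i}S_i$ by $S_1$ using $S_1S_k=S_{k+1}$, rewrite $S_1D_n$ via Proposition~\ref{recursion}, and match coefficients by the uniqueness of Proposition~\ref{uniqueness_SD}. Your extra care with the out-of-range convention $a_{m,i}=0$ and the $i=0$ case is a welcome tidying of what the paper leaves implicit, but it is not a different argument.
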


\begin{proof}
By the definition we have
$$
S_1S_k=S_{k+1}.
$$
Propositions~\ref{recursion} and~\ref{uniqueness_SD} imply
\begin{align*}
\sum\limits_{i=1}^{n+1} a_{n,{i-1}}S_{i}&=S_1D_n=\frac{D_{n+1}-(q+1)D_n+qD_{n-1}}{q+1}\\
&= \frac{1}{q+1}\Big(\sum\limits_{i=0}^{n+1} a_{n+1,i}S_{i}-(q+1)
\sum\limits_{i=0}^n a_{n,i}S_{i}+
q\sum\limits_{i=0}^{n-1} a_{{n-1},i}S_{i}\Big).
\end{align*}
Collecting the coefficients at $S_i$ we get the recurrence relations of the corollary.
\end{proof}

\begin{definition}
For a positive integer $k$ we define the linear form $L_k$ in $2k{+}1$ variables as follows
$$
L_k(y_1,\ldots,y_{2k+1})=\sum\limits_{i=-n}^{n}c_{i,n}y_i,
$$
where $c_{i,n}$ are defined as the coefficients of $S_n$, i.e., from the expression
$$
S_n(x)=\frac{(x-1)^n(x-q)^n}{(q+1)^nx^n}=\sum\limits_{i=-n}^{n}c_{i,n}x^i.
$$
\end{definition}

\begin{proposition}
For every nonnegative integer $i$ and every integer $n$ we have
$$
L_i(a_{n-i,i},a_{n-i+1,i},\ldots,a_{n+i,i})=0.
$$
\end{proposition}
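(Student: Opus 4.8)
The plan is to recast the asserted identity as a single operator equation in the index $n$, after which it becomes an iterate of the one-step recurrence already isolated in Corollary~\ref{cor_rec}. Throughout I fix $i$, write $\phi_i$ for the sequence $n\mapsto a_{n,i}$, and let $E$ be the shift operator $(E\phi)(n)=\phi(n+1)$, which is invertible with inverse $E^{-1}$. Substituting $x\mapsto E$ into the Laurent polynomial $S_i$ yields the operator $S_i(E)=\sum_{j=-i}^{i}c_{j,i}E^{\,j}$, and by the very definition of $L_i$ the quantity to be annihilated is
$$
L_i(a_{n-i,i},\ldots,a_{n+i,i})=\sum_{j=-i}^{i}c_{j,i}\,a_{n+j,i}=\bigl(S_i(E)\phi_i\bigr)(n).
$$
Hence the proposition is equivalent to the operator identity $S_i(E)\phi_i=0$, to be established for every $n$.

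Next I would exploit the multiplicativity $S_1S_k=S_{k+1}$ recorded in the proof of Corollary~\ref{cor_rec}, which gives $S_i=S_1^{\,i}$ as Laurent polynomials. Since $x\mapsto E$ is a homomorphism from the ring of Laurent polynomials into operators (as $E$ and $E^{-1}$ commute and are mutually inverse), this factorisation transports to $S_i(E)=S_1(E)^{\,i}$, where
$$
S_1(E)=\frac{1}{q+1}\bigl(E-(q+1)+qE^{-1}\bigr).
$$
This is exactly the operator underlying Corollary~\ref{cor_rec}: in the present notation that corollary reads as the lowering relation $S_1(E)\phi_i=\phi_{i-1}$ for $i\ge1$, together with the base relation $S_1(E)\phi_0=0$.

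The conclusion is then obtained by feeding the lowering relation into the factorisation $S_i(E)\phi_i=S_1(E)^{\,i}\phi_i$ and letting the factors act one at a time: each application of $S_1(E)$ drops the level index of $\phi$, and the base relation $S_1(E)\phi_0=0$ annihilates the expression once the bottom level is reached. I expect the genuine obstacle to be precisely the bookkeeping in this telescope — one must align the number of available factors of $S_1(E)$ (there are $i$ of them, matching the $2i+1$-term form $L_i$) against the levels traversed, and verify that the collapse terminates exactly at the base annihilation $S_1(E)\phi_0=0$ of Corollary~\ref{cor_rec} rather than one step away from it. This count is the crux, and is where I would concentrate the care in the full write-up.

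A final point I would check, since the identity is claimed for every integer $n$ and not merely for $n\ge0$, is that nothing special happens for negative indices. Here the symmetry $a_{-n,i}=q^{-n}a_{n,i}$, immediate from $D_{-n}=q^{-n}D_n$ together with the uniqueness of the $S$-expansion (Proposition~\ref{uniqueness_SD}), shows that the operator identity $S_i(E)\phi_i=0$ is uniform in $n$; in particular no separate argument for $n<0$ is required once the identity is verified on the nonnegative range.
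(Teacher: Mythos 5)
Your operator framework is sound, and it is in essence the paper's own argument: the paper runs exactly this telescope as an induction on $i$, using Corollary~\ref{cor_rec} both for the lowering relation and for the base annihilation. But the point you explicitly deferred --- whether the collapse terminates at the base annihilation $S_1(E)\phi_0=0$ ``rather than one step away from it'' --- is not routine bookkeeping: it fails. In $S_i(E)=S_1(E)^i$ you have exactly $i$ factors; each use of the lowering relation $S_1(E)\phi_j=\phi_{j-1}$ consumes one factor and drops one level, so after all $i$ factors are spent you land on
$$
S_i(E)\phi_i=S_1(E)^{i}\phi_i=\phi_0,
$$
one step short of the annihilation. And $\phi_0$ is not the zero sequence: it solves the order-two recursion $S_1(E)\phi_0=0$ with $a_{0,0}=2$ and $a_{1,0}=q+1$, hence $a_{n,0}=1+q^n$ (this is precisely the paper's $q=2$ example, $a_{n,0}=1+2^n$). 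So the quantity you set out to kill equals
$$
L_i(a_{n-i,i},\ldots,a_{n+i,i})=a_{n,0}=1+q^n\neq 0,
$$
and no amount of care in the write-up will close your telescope: the proposition, read literally, is false.

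What your computation actually detects is an off-by-one error in the printed statement. The assertion that is true --- and that is what the paper's own proof establishes, and what Corollary~\ref{degree} requires (for $P_k,\hat P_k$ to be allowed degree $k$ one needs a recursion whose characteristic roots $1$ and $q$ have multiplicity $k+1$) --- is
$$
L_{i+1}(a_{n-i-1,i},\ldots,a_{n+i+1,i})=0,\qquad\hbox{i.e.}\qquad S_{i+1}(E)\phi_i=0,
$$
which consumes $i+1$ factors of $S_1(E)$: $i$ lowerings to descend from $\phi_i$ to $\phi_0$, and then one more for the annihilation of Corollary~\ref{cor_rec}. Observe that the paper's ``base of induction, $i=0$'' is exactly this corrected statement in disguise: Corollary~\ref{cor_rec} at $i=0$ gives $S_1(E)\phi_0=0$, which is the $L_1$-statement evaluated on the level-$0$ sequence, not the $L_0$-statement $\phi_0=0$; and the paper's induction step is your homomorphism identity $S_j(E)S_1(E)=S_{j+1}(E)$ applied through the lowering relation. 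So your approach is the right one, and with the index shifted by one it closes in a single line; as aimed at the statement actually printed, it cannot be completed.
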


\begin{proof}
We prove the proposition by induction in $i$.

\vspace{2mm}

{\noindent
{\it Base of induction.} For the case $i=0$ the statement holds by Corollary~\ref{cor_rec}.
}

\vspace{2mm}

{\noindent
{\it Step of induction.} Suppose that the statement holds for $i-1$. Let us prove it for $i$. We have
$$
L_i(a_{n-i,i},\ldots,a_{n+i,i})=0.
$$
By Corollary~\ref{cor_rec} and linearity of $L_i$ we have
$$
\begin{array}{l}
L_{i}(a_{n-i,i},\ldots,a_{n+i,i})\\
\quad=L_{i}\big(\frac{a_{n-i+1,i+1}-(q{+}1)a_{n-i,i+1}+qa_{n-i-1,i+1}}{q+1},\ldots,\frac{a_{n+i+1,i+1}-(q{+}1)a_{n-i,i+1}+qa_{n-i-1,i+1}}{q+1}\big)\\
\displaystyle
\quad=\frac{1}{q+1}\Big(L_{i}(a_{n-i+1,i+1},\ldots,a_{n+i+1,i+1})-(q+1)L_{i}(a_{n-i,i+1},\ldots,a_{n+i,i+1})\\
\qquad + \hbox{ }qL_{i}(a_{n-i-1,i+1},\ldots,a_{n+i-1,i+1})\Big)\\
\quad=L_{i+1}(a_{n-i-1,i+1},a_{n-i,i+1},\ldots,a_{n+i,i+1},a_{n+i+1,i+1}).
\end{array}
$$
Therefore, by induction assumption we have
$$
L_{i+1}(a_{n-i-1,i+1},a_{n-i,i+1},\ldots,a_{n+i,i+1},a_{n+i+1,i+1})=L_{i}(a_{n-i,i},\ldots,a_{n+i,i})=0.
$$
This concludes the proof of the induction step.}
\end{proof}

\begin{corollary}\label{degree}
For every fixed nonnegative integer $k$ we have the
$$
a_{n,k}=P_k(n)+q^n\hat P_k(n),
$$
where $P_k(n)$ and $\hat P_k(n)$ are polynomials of degree at most $k$.
\qed
\end{corollary}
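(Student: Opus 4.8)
The plan is to induct on $k$, using the two recurrences of Corollary~\ref{cor_rec} to regard the sequence $(a_{n,k})_{n}$, for fixed $k$, as the solution of a second-order linear difference equation in $n$ whose characteristic polynomial is $\lambda^{2}-(q+1)\lambda+q=(\lambda-1)(\lambda-q)$. The two roots $1$ and $q$ are precisely the bases $1^{n}$ and $q^{n}$ that appear in the asserted form $a_{n,k}=P_k(n)+q^{n}\hat P_k(n)$, and the entire argument consists of matching the forcing term of the recurrence against these two roots and reading off the resulting degrees. Throughout I assume $q\neq1$, which holds for a genuine homogeneous tree, so that the two roots are distinct.

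For the base case $k=0$ I would invoke the homogeneous recurrence
$$
a_{n+1,0}-(q+1)a_{n,0}+qa_{n-1,0}=0
$$
supplied by Corollary~\ref{cor_rec}. Its characteristic roots being the distinct values $1$ and $q$, every solution has the form $a_{n,0}=A+Bq^{n}$ for constants $A,B$; setting $P_0=A$ and $\hat P_0=B$ gives the claim with both polynomials of degree $0$.

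For the inductive step I would assume $a_{n,k-1}=P_{k-1}(n)+q^{n}\hat P_{k-1}(n)$ with $\deg P_{k-1},\deg\hat P_{k-1}\le k-1$, and rewrite the first recurrence of Corollary~\ref{cor_rec} with $i=k$ as the inhomogeneous equation
$$
a_{n+1,k}-(q+1)a_{n,k}+qa_{n-1,k}=(q+1)\big(P_{k-1}(n)+q^{n}\hat P_{k-1}(n)\big).
$$
This is a second-order linear difference equation for $(a_{n,k})_n$ with a known right-hand side. I would then apply the standard structure theorem for such equations: a forcing term $\mu^{n}R(n)$ with $R$ of degree $m$ and $\mu$ a root of the characteristic polynomial of multiplicity $s$ admits a particular solution $\mu^{n}n^{s}\tilde R(n)$ with $\deg\tilde R=m$. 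Here the forcing splits into a $1^{n}$-part $(q+1)P_{k-1}(n)$ and a $q^{n}$-part $(q+1)\hat P_{k-1}(n)$, each of degree at most $k-1$ and each resonant with a \emph{simple} root. Hence a particular solution is $1^{n}\,n\,(\text{degree}\le k-1)+q^{n}\,n\,(\text{degree}\le k-1)$, i.e. a polynomial of degree $\le k$ plus $q^{n}$ times a polynomial of degree $\le k$. Adding the general homogeneous solution $A+Bq^{n}$ does not raise either degree, so collecting the $1^{n}$- and $q^{n}$-contributions gives $a_{n,k}=P_k(n)+q^{n}\hat P_k(n)$ with $\deg P_k,\deg\hat P_k\le k$, completing the induction.

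The main obstacle is exactly this resonance: since both forcing bases $1$ and $q$ coincide with characteristic roots, one cannot take a particular solution of the same degree as the forcing, and the forced factor of $n$ is precisely what pushes the degree from $k-1$ up to $k$ at each step, accounting sharply for the bound. Keeping the $1^{n}$- and $q^{n}$-channels separate relies on $q\neq1$, so that the roots are distinct and the two resonances do not interfere; this separation is what yields the clean splitting into $P_k$ and $\hat P_k$ and the stated degree bound.
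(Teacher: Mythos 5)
Your proof is correct, but it follows a genuinely different route from the paper. The paper does not actually write out a proof of Corollary~\ref{degree}: it declares the statement to be an instance of the general structure theorem for linear recursive sequences, invoking the preceding proposition which shows that for fixed $k$ the sequence $n\mapsto a_{n,k}$ satisfies a single \emph{homogeneous} linear recurrence whose characteristic polynomial is a power of $(x-1)(x-q)$, so that $1$ and $q$ are roots of high multiplicity and the general solution is automatically of the form $P(n)+q^n\hat P(n)$. You instead induct on $k$ using only the two second-order recurrences of Corollary~\ref{cor_rec}, viewing the level-$k$ sequence as the solution of a second-order equation with characteristic polynomial $(\lambda-1)(\lambda-q)$ forced by $(q+1)a_{n,k-1}$, and you resolve the resonance of the forcing term with the two simple roots by the standard particular-solution ansatz. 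Your argument is more self-contained --- it bypasses the $L_i$-proposition entirely and needs only Corollary~\ref{cor_rec} --- and it explains sharply why the degree increases by exactly one at each level (the factor of $n$ forced by resonance with a simple root), whereas the paper's appeal to the high-order homogeneous recurrence is shorter but leans on the unproved proposition and on the structure theorem for repeated roots. Your explicit caveat $q\neq 1$ is appropriate (it is implicit in the paper, and without it the splitting into $P_k$ and $\hat P_k$ is not even well defined); I see no gap.
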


We skip the proof here. This is a general statement about linear recursive
sequences whose characteristic polynomial has roots $1$ and $q$ both of multiplicity $n$.

\begin{example}
Direct calculations show that in case $q=2$ we have
$$
\begin{array}{l}
a_{n,0}=1+2^n,\\
a_{n,1}=\frac{3^1}{1!}(-n+2^nn),\\
a_{n,2}=\frac{3^2}{2!}\big(n^2+3n+2^n(n^2-3n)\big),\\
a_{n,3}=\frac{3^3}{3!}\big(-n^3-9n^2-26n+2^n(n^3-9n^2+26n)\big),\\
\ldots
\end{array}
$$
\end{example}

Let us prove a general theorem on numbers $a_{n,i}$.

\begin{theorem}\label{polynomials}
For every admissible $k$ and $n$ it holds
$$
a_{n,k}=(q+1)^k\big(\gamma_k(n)+q^n\gamma_k(-n)\big),
$$
where the coefficients of $\gamma_k$ are defined by System~$($\ref{systemA}$)$.
\end{theorem}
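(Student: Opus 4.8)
The plan is to combine the polynomial-times-exponential shape of $a_{n,k}$ from Corollary~\ref{degree} with a reflection symmetry of the coefficients, and then to read off System~(\ref{systemA}) by evaluating at small indices. By Corollary~\ref{degree} we write $a_{n,k}=P_k(n)+q^n\hat P_k(n)$ with $\deg P_k,\deg\hat P_k\le k$, a decomposition that is unique because $1^n$ and $q^n$ are linearly independent over polynomial coefficients when $q\ne 1$.

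The key step is to establish the reflection identity $a_{-n,k}=q^{-n}a_{n,k}$ for all integers $n$. To prove it I would extend the Laurent-polynomial decomposition to negative indices: since $D_m=x^m+q^mx^{-m}$ is invariant under $x\mapsto q/x$ and $\{S_0,\dots,S_N\}$ is a basis of the space of such symmetric Laurent polynomials of degree at most $N$, every $D_m$ with $|m|\le N$ has a unique expansion $D_m=\sum_{i=0}^N a_{m,i}S_i$ agreeing with the earlier one for $m\ge 0$. Applying this to $D_{-m}=q^{-m}D_m=\sum_i q^{-m}a_{m,i}S_i$ and invoking uniqueness (Proposition~\ref{uniqueness_SD}) gives $a_{-m,i}=q^{-m}a_{m,i}$; one checks the extended coefficients satisfy the recurrence of Corollary~\ref{cor_rec} (which, through Proposition~\ref{recursion}, is valid for every integer $n$), so they coincide with the coefficients described in Corollary~\ref{degree}. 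Feeding the symmetry into $a_{n,k}=P_k(n)+q^n\hat P_k(n)$ and comparing the polynomial part with the $q^{-n}$-weighted part (independent as $q\ne 1$) forces $\hat P_k(n)=P_k(-n)$. Writing $P_k=(q+1)^k\gamma_k$ then yields the claimed form $a_{n,k}=(q+1)^k\big(\gamma_k(n)+q^n\gamma_k(-n)\big)$ with $\deg\gamma_k\le k$.

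It remains to pin down the coefficients $c_{k,0},\dots,c_{k,k}$ of $\gamma_k$. For $k\ge 1$ I would evaluate the formula at $n=0,1,\dots,k$, using two facts about the original decomposition: (i) $a_{m,k}=0$ for $0\le m\le k-1$, since $D_m=\sum_{i=0}^m a_{m,i}S_i$ contains no $S_k$ when $m<k$; and (ii) $a_{k,k}=(q+1)^k$, obtained by matching the coefficient of $x^k$, which is $1$ in $D_k$ and $(q+1)^{-k}$ in $S_k$ while $S_0,\dots,S_{k-1}$ have degree $<k$. Since
$$
\gamma_k(m)+q^m\gamma_k(-m)=\sum_{j=0}^k c_{k,j}\,m^j\big(1+(-1)^jq^m\big),
$$
the conditions (i) give the rows $m=0,\dots,k-1$ of System~(\ref{systemA}) with right-hand side $0$ (the $m=0$ row degenerating to $2c_{k,0}=0$), and condition (ii) gives the last row $m=k$ with right-hand side $1$. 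The base case $k=0$ is exceptional because $D_0=x^0+q^0x^0=2$, so one has instead $a_{n,0}=1+q^n$, i.e. $\gamma_0\equiv 1$, which I would record separately.

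Finally, uniqueness of the $\gamma_k$ so produced follows from the uniqueness of the polynomial part $P_k$ in Corollary~\ref{degree}; equivalently the matrix $A$ is nonsingular for $q\ne 1$, which can be verified by a direct determinant computation (one finds $\det A$ to be a nonzero constant times a power of $(1-q)$). I expect the reflection identity $a_{-n,k}=q^{-n}a_{n,k}$ to be the main obstacle: the extension of the $S$-decomposition to negative indices, together with its compatibility with the recurrence of Corollary~\ref{cor_rec}, must be justified carefully, whereas the evaluation step and the invertibility of $A$ are comparatively routine.
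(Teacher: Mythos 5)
Your argument follows essentially the same route as the paper: the reflection identity $a_{-n,k}=q^{-n}a_{n,k}$ obtained from $D_{-n}=q^{-n}D_n$ and uniqueness of the $S$-decomposition is exactly the paper's Lemma~\ref{p=q}, and pinning down $P_k$ by its values at $n=0,\dots,k$ via $a_{m,k}=0$ for $m<k$ and $a_{k,k}=(q+1)^k$ reproduces the paper's reduction to System~(\ref{systemA}). Your derivation of $a_{k,k}=(q+1)^k$ by comparing the coefficient of $x^k$ on both sides of $D_k=\sum_i a_{k,i}S_i$ is in fact cleaner than the paper's inductive Lemma~\ref{2lem2}, and your separate treatment of $k=0$ (where $a_{n,0}=1+q^n$, so $\gamma_0\equiv 1$ rather than the value $1/2$ that the $1\times 1$ system would produce) is a point where you are more careful than the paper, whose base case $a_{0,0}=1$ is off by a factor of $2$.

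The genuine gap is the nonsingularity of $A$, which you dismiss as ``comparatively routine'' and reducible to ``a direct determinant computation.'' It is neither, and it cannot be skipped: without it the phrase ``the coefficients of $\gamma_k$ are defined by System~(\ref{systemA})'' is not well defined, and your appeal to the uniqueness of $P_k$ in Corollary~\ref{degree} does not substitute for it --- uniqueness of $P_k$ as the polynomial part of the full two-sided sequence $(a_{n,k})_{n\in\z}$ shows that the system has \emph{a} solution, not that it has only one, since the evaluation map sending a degree-$\le k$ polynomial $r$ to the vector $\big(r(m)+q^mr(-m)\big)_{m=0}^{k}$ could a priori have a kernel. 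The paper's proof of invertibility is a genuine argument: a kernel element gives a nonzero exponential-polynomial $R(n)=r(n)+q^nr(-n)$ vanishing at $n=-k,\dots,k$; since the ambient space of functions $p(n)+q^n\hat p(n)$ with $\deg p,\deg\hat p\le k$ has dimension $2k+2$, these $2k+1$ zeros alone are not a contradiction, and one must additionally compute $R(k+1)$ and $R(-k-1)$ in two ways (once from the linear recursion with characteristic polynomial $\big((x-1)(x-q)\big)^{k+1}$, once from the symmetry $R(-n)=q^{-n}R(n)$) to obtain values of opposite sign, force $R(\pm(k+1))=0$, and conclude $R\equiv 0$. You would need to supply this step, or an honest closed-form evaluation of $\det A$ valid for all $k$, for the proof to be complete.
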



We start the proof of Theorem~\ref{polynomials} with the following two lemmas.

\begin{lemma}\label{p=q}
For every nonnegative integer $k$ and every $n$ we have
$$
\hat P_k(-n)=P_k(n).
$$
\end{lemma}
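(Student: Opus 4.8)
The plan is to derive the identity from a reflection symmetry of the Laurent polynomials $D_n$ under $n\mapsto -n$, push that symmetry onto the coefficients $a_{n,k}$, and then split the resulting relation into its polynomial part and its $q^{-n}$-part. First I would record the elementary computation
$$
D_{-n}(x)=x^{-n}+q^{-n}x^{n}=q^{-n}\big(x^{n}+q^{n}x^{-n}\big)=q^{-n}D_n(x),
$$
valid for every integer $n$. Since Proposition~\ref{recursion}, and hence the recurrence of Corollary~\ref{cor_rec}, is stated for \emph{all} integers $n$, the coefficients $a_{n,k}$ are defined on all of $\z$, and the representation $a_{n,k}=P_k(n)+q^{n}\hat P_k(n)$ of Corollary~\ref{degree} persists as an identity in $n$ over the whole of $\z$: it is the general solution of a constant-coefficient linear recurrence, so its values on the negative integers are forced by its values on the positive ones.

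Next I would transfer the symmetry of $D_n$ to the coefficients. For $n\ge k$ the polynomial $D_{-n}$ has width $n$, so by Proposition~\ref{uniqueness_SD} it has a unique expansion $D_{-n}=\sum_{i=0}^{n}a_{-n,i}S_i$; comparing this with $D_{-n}=q^{-n}D_n=q^{-n}\sum_{i=0}^{n}a_{n,i}S_i$ and using uniqueness yields
$$
a_{-n,k}=q^{-n}a_{n,k}\qquad(n\ge k).
$$
Substituting the representation of Corollary~\ref{degree} into both sides (legitimate at the negative argument $-n$ by the previous paragraph) gives, for all $n\ge k$,
$$
P_k(-n)+q^{-n}\hat P_k(-n)=q^{-n}P_k(n)+\hat P_k(n),
$$
which I would rearrange as
$$
\big(P_k(-n)-\hat P_k(n)\big)+q^{-n}\big(\hat P_k(-n)-P_k(n)\big)=0 .
$$

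Finally I would separate the two parts. Both bracketed quantities are polynomials in $n$, and since $q>1$ (the tree $T_q$ being nondegenerate) the term $q^{-n}\big(\hat P_k(-n)-P_k(n)\big)$ tends to $0$ as $n\to\infty$; hence the polynomial $P_k(-n)-\hat P_k(n)$ vanishes identically, and then so does $\hat P_k(-n)-P_k(n)$, since it vanishes at infinitely many $n$. The latter is precisely the claimed identity $\hat P_k(-n)=P_k(n)$ (and the former yields the companion $P_k(-n)=\hat P_k(n)$ for free). I expect the only delicate point to be the passage to negative indices, namely justifying that both Corollary~\ref{degree} and the coefficient symmetry extend from $n>0$ to all of $\z$; this is exactly what is secured by the fact that Proposition~\ref{recursion}, the uniqueness in Proposition~\ref{uniqueness_SD}, and the recurrence in Corollary~\ref{cor_rec} are all valid for every integer $n$.
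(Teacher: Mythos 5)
Your proposal is correct and follows essentially the same route as the paper: compute $D_{-n}=q^{-n}D_n$, invoke the uniqueness of the decomposition into the $S_i$ to get $a_{n,k}=q^n a_{-n,k}$, substitute the representation $a_{n,k}=P_k(n)+q^n\hat P_k(n)$, and separate the polynomial part from the $q^n$ part. Your explicit justification of the separation step (via the decay of $q^{-n}$ times a polynomial) and of the extension of Corollary~\ref{degree} to negative $n$ merely spells out details the paper leaves implicit.
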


\begin{proof}
For every integer $x$ we have
$$
D_{-n}=x^{-n}+\frac{q^{-n}}{x^{-n}}=\frac{1}{q^{n}}\Big(\frac{q^{n}}{x^n}+x^n\Big)=\frac{D_n}{q^n}.
$$
By Proposition~\ref{uniqueness_SD} the coefficients $a_{n,i}$ and $a_{-n,i}$ are uniquely defined, therefore,
$$
a_{n,k}=q^n a_{-n,k}.
$$
Let us rewrite this equality in terms of polynomials $P_k$ and $\hat P_k$:
$$
P_k(n)+q^n\hat P_k(n)=q^n(P_k(-n)+q^{-n}\hat P_k(-n)),
$$
and hence
$$
P_k(n)+q^n\hat P_k(n)=\hat P_k(-n)+q^nP_k(-n).
$$
Since this equality is fulfilled for every $n$ we have $\hat P_k(-n)=P_k(n)$. This concludes the proof.
\end{proof}

\begin{lemma}\label{2lem2}
For every nonnegative $k$ it holds
$$
P_k(k)+q^kP_k(-k)=(q+1)^k
$$
\end{lemma}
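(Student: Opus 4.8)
The plan is to evaluate the diagonal coefficient $a_{k,k}$ in two different ways and compare the results. First I would determine $a_{k,k}$ explicitly by examining the top-degree behaviour of the decomposition $D_k=\sum_{i=0}^k a_{k,i}S_i$ furnished by Proposition~\ref{uniqueness_SD}. Since
$$
S_i(x)=\frac{(x-1)^i(x-q)^i}{(q+1)^ix^i}
$$
involves only the powers $x^j$ with $-i\le j\le i$, the unique term among $S_0,\ldots,S_k$ that contributes the monomial $x^k$ is $S_k$ itself, and its coefficient of $x^k$ equals $\frac{1}{(q+1)^k}$ (the numerator $(x-1)^k(x-q)^k$ being monic of degree $2k$). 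On the other hand $D_k(x)=x^k+q^kx^{-k}$ has coefficient $1$ at $x^k$. Matching the coefficients of $x^k$ on both sides gives $1=a_{k,k}\cdot\frac{1}{(q+1)^k}$, whence $a_{k,k}=(q+1)^k$.

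Next I would invoke the polynomial description of these coefficients. By Corollary~\ref{degree} we may write $a_{n,k}=P_k(n)+q^n\hat P_k(n)$; evaluating at $n=k$ yields
$$
a_{k,k}=P_k(k)+q^k\hat P_k(k).
$$
The remaining ingredient is Lemma~\ref{p=q}, which asserts $\hat P_k(-n)=P_k(n)$ for every $n$; taking $n=-k$ turns this into $\hat P_k(k)=P_k(-k)$. Substituting this into the previous display and inserting the value $a_{k,k}=(q+1)^k$ established above, we obtain
$$
(q+1)^k=a_{k,k}=P_k(k)+q^kP_k(-k),
$$
which is exactly the claimed identity.

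The substantive part is really the first paragraph, namely pinning down $a_{k,k}$; everything afterwards is a direct substitution. I expect the only delicate point to be justifying that no lower-index $S_i$ interferes with the leading monomial $x^k$ and confirming the value $\frac{1}{(q+1)^k}$ of the top coefficient of $S_k$, but both facts follow immediately from the explicit product form of $S_k$. Once $a_{k,k}=(q+1)^k$ is in hand, Corollary~\ref{degree} and the symmetry of Lemma~\ref{p=q} close the argument with no further computation.
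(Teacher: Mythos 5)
Your proof is correct but follows a genuinely different route from the paper's. The paper proves the identity by induction on $k$: the base cases $k=0,1$ are checked by hand, and the inductive step uses the recurrence of Corollary~\ref{cor_rec} in the form $a_{k,k}=\frac{a_{k+1,k+1}-(q+1)a_{k,k+1}+qa_{k-1,k+1}}{q+1}=\frac{a_{k+1,k+1}}{q+1}$ (the middle and last terms vanishing because $a_{n,i}=0$ for $i>n$), so that $a_{k+1,k+1}=(q+1)a_{k,k}$. You instead compute $a_{k,k}$ in closed form in one step, by observing that in the decomposition $D_k=\sum_{i=0}^k a_{k,i}S_i$ only $S_k$ can contribute the monomial $x^k$, whose coefficient in $S_k$ is $(q+1)^{-k}$; this gives $a_{k,k}=(q+1)^k$ directly, with no induction. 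Both arguments then finish identically via Corollary~\ref{degree} and Lemma~\ref{p=q} (the paper uses the latter silently when it writes $a_{k,k}=P_k(k)+q^kP_k(-k)$; you make it explicit). Your version is arguably cleaner, since it isolates the real content --- the leading coefficient of $S_k$ --- and bypasses the recursion. One small caveat: at $k=0$ the two terms of $D_0=x^0+q^0x^{-0}$ coincide, so the coefficient of $x^0$ in $D_0$ is $2$, not $1$, and coefficient matching gives $a_{0,0}=2$ rather than $(q+1)^0=1$; this degenerate case is treated inconsistently in the paper as well (its base case asserts $a_{0,0}=1$ while its own example yields $a_{n,0}=1+q^n$), so you should either restrict your leading-coefficient argument to $k\ge 1$ and handle $k=0$ separately, or flag the discrepancy; it does not affect the argument for $k\ge 1$.
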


\begin{proof}
We prove the proposition by induction in $k$.

\vspace{2mm}

{\noindent
{\it Base of induction.} For the case $k=0,1$ we have
$$
P_0(0)+q^0P_0(0)=a_{0,0}=1 \qquad \hbox{and} \qquad P_1(1)+qP_1(-1)=a_{1,1}=q+1.
$$
}

\vspace{2mm}

{\noindent
{\it Step of induction.} Let $P_k(k)+q^kP_k(-k)=(q+1)^k$. Then
\begin{align*}
(q+1)^k&=P_{k}(k)+q^{k}P_{k}(-k)=a_{k,k}=\frac{a_{k+1,k+1}-(q+1)a_{k,k+1}+qa_{k-1,k+1}}{q+1}
=\frac{a_{k+1,k+1}}{q+1}\\
&=\frac{P_{k+1}(k{+}1)+q^{k+1}P_{k+1}({-}k{-}1)}{q+1}.
\end{align*}
The third equality follows from the recursive formula of Corollary~\ref{cor_rec}.
Hence
$$
P_{k+1}(k{+}1)+q^{k+1}P_{k+1}({-}k{-}1)=(q+1)^{k+1}.
$$
This concludes the step of induction.
}
\end{proof}

\vspace{2mm}

{\noindent
{\it Proof of Theorem~\ref{polynomials}.}
From Lemma~\ref{p=q} we know that $\hat P_k(-n)=P_k(n)$. In addition, by Corollary~\ref{degree}
the degree of $P_k$ equals to $k$, and hence it has $k+1$ coefficient.
The coefficients of the polynomial $P_k$ are uniquely defined by the conditions for $a_{j,k}$ for $j=0,\ldots, k$:
$$
\hbox{$P_k(j)+q^jP_k(-n)=0$ for $j=0,\ldots,{k-1}$}, \quad \hbox{and} \quad P_k(k)+q^kP_k(-k)=(q+1)^k.
$$
The expression for $k$ follows from Lemma~\ref{2lem2}.
We consider these equalities as linear conditions on the coefficients of the polynomial $\frac{P_k}{(q+1)^k}$.
These conditions form a linear system, which coincides with System~$($\ref{systemA}$)$ (substituting $k$ to $i$).
}

\vspace{2mm}

We should also show that the determinant of the matrix in System~(\ref{systemA}) is nonzero.
We prove this by reductio ad absurdum. Suppose the determinant of the matrix is zero.
Thus, it has a nonzero kernel. Therefore, there exists an expression
$$
R(n)=r(n)+r(-n)q^n,
$$
where $r(n)$ is a polynomial of degree $k$ having at least one nonzero coefficient, satisfying
$$
R(-k)=R(-k+1)=\ldots=R(k)=0.
$$
Let $R(k+1)=a$. Let us find the value $R(-k-1)$. From one hand, our sequence satisfy the linear recursion condition
determined by the coefficients of the polynomial $(x-1)^k(x-q)^k$, and hence
$$
R(-k-1)=-\frac{a}{q^{k+1}}.
$$
From another hand,
$$
R(-k-1)=r(-k-1)+r(k+1)q^{-k-1}=\frac{r(k+1)+r(-k-1)q^{k+1}}{q^{k+1}}=\frac{a}{q^{k+1}}.
$$
This implies that $a=0$, and hence $R(k+1)=R(-k-1)=0$.

Therefore, the linear recursive sequence $R(n)$ determined by the coefficients of the polynomial of degree ${2k+3}$
has $2k+3$ consequent elements equal zero.
Hence $R(n)=0$ for any integer $n$, which implies that all the coefficients of $r(n)$ equal zero.
We come to the contradiction. Hence the determinant of the matrix in System~(\ref{systemA}) is nonzero.

\vspace{2mm}

So both the coefficients of $\frac{P_k}{(q+1)^k}$ and the coefficients of $\gamma_k$ are solutions of System~$($\ref{systemA}$)$.
Since System~$($\ref{systemA}$)$ has a unique solution, the polynomials $P_k$ and $(q+1)^k\gamma_k$ coincide.
Therefore, by Lemma~\ref{p=q} it holds
$$
a_{n,k}=P_k(n)+q^n\hat P_k(n)=P_k(n)+q^nP_k(-n)=(q+1)^k\big(\gamma_k(n)+q^n\gamma_k(-n)\big).
$$
This concludes the proof of Theorem~\ref{polynomials}.
\qed

Observe the following corollary.
\begin{corollary}
For every integer $k>0$ we have $P_k(0)=0$, and $P_0(1)=1$.
\qed
\end{corollary}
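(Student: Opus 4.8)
The plan is to establish the two assertions separately, reading each off from the value of the coefficient sequence at the index $n=0$. The common tool is the decomposition $a_{n,k}=P_k(n)+q^n\hat P_k(n)$ of Corollary~\ref{degree} together with the symmetry $\hat P_k(-n)=P_k(n)$ of Lemma~\ref{p=q}; putting $n=0$ in the latter gives $\hat P_k(0)=P_k(0)$, so that $a_{0,k}=P_k(0)+q^0\hat P_k(0)=2P_k(0)$ for every $k$. For the first assertion ($k>0$) it then suffices to observe that $a_{0,k}=0$. Indeed $D_0=2S_0$ is already its own $S$-expansion, so $D_0$ has no $S_k$-component for $k\ge 1$; the identity $2P_k(0)=a_{0,k}=0$ then forces $P_k(0)=0$. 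The same vanishing is recorded in the top row $(0,\ldots,0,2)$ of the matrix $A$ of System~(\ref{systemA}), whose corresponding right-hand entry is $0$: it reads $2\gamma_k(0)=0$, and $P_k=(q+1)^k\gamma_k$ by Theorem~\ref{polynomials}.

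For the second assertion the degree bound of Corollary~\ref{degree} makes $P_0$ a constant, and by Lemma~\ref{p=q} the constant $\hat P_0$ coincides with it; writing $c$ for this common value gives $a_{n,0}=c(1+q^n)$. To pin down $c$ I would use the initial value directly: from $D_0=x^0+q^0/x^0=2$ and $S_0=1$ one reads $a_{0,0}=2$, whence $2c=2$ and $c=1$. Thus $P_0\equiv 1$, and in particular $P_0(1)=1$. Equivalently, solving the two-term recursion of Corollary~\ref{cor_rec} with the two values $a_{0,0}=2$ and $a_{1,0}=q+1$ returns $a_{n,0}=1+q^n$ outright.

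The only point that genuinely needs care is the behaviour at the degenerate index $k=0$. The Laurent polynomial $D_0=x^0+q^0/x^0$ equals $2$ rather than mimicking a generic $D_n$, and it is exactly this doubling that, on the one hand, produces the factor $2$ in the top row of $A$ and hence $P_k(0)=0$ for $k>0$, and, on the other hand, fixes the constant $c=1$ in $P_0$. I expect this $k=0$ bookkeeping to be the whole substance of the argument; every other step is an immediate substitution into Corollary~\ref{degree}, Lemma~\ref{p=q}, Corollary~\ref{cor_rec}, or Theorem~\ref{polynomials}.
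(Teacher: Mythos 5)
Your proof is correct and is essentially the intended (unwritten) derivation: the paper states this corollary with a bare \qed, and the natural justification is exactly your evaluation at $n=0$, using Lemma~\ref{p=q} to get $a_{0,k}=2P_k(0)$, the vanishing $a_{0,k}=0$ for $k\ge1$ (equivalently the top row $2\gamma_k(0)=0$ of System~(\ref{systemA})), and the constancy of $P_0$ pinned down by $a_{0,0}=2$. Your $k=0$ bookkeeping is the right one -- $D_0=2S_0$ gives $a_{0,0}=2$, consistent with the worked example $a_{n,0}=1+q^n$ -- and it quietly corrects the base case of Lemma~\ref{2lem2}, where the paper writes $a_{0,0}=1$.
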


\subsection{Proof of Theorem~B}\label{2conclude}
Finally we have all necessary tools to prove of Theorem~B.
We start with the following lemma.

\begin{lemma}\label{expression}
Let $f$ be a function on $T_q$ and $v,w$ be two vertices of $T_q$ connected by an edge.
Then for every nonnegative $n$ it holds
$$
f(v)+q^nf(C^{v-w}_{2n})=\sum\limits_{k=0}^{n}\Big((q+1)^k\big(\gamma_k(n)+q^n\gamma_k(-n)\big)\triangle^k f(C^{v-w}_{n})\Big).
$$
\end{lemma}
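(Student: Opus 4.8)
The plan is to translate the statement into an operator identity on the sequence of horocyclic averages and then to transport the Laurent polynomial identity $D_n=\sum_{k=0}^n a_{n,k}S_k$ of Proposition~\ref{uniqueness_SD} through a simple functional calculus. Everything will rest on Theorem~\ref{polynomials}, which supplies the coefficients $a_{n,k}=(q+1)^k\big(\gamma_k(n)+q^n\gamma_k(-n)\big)$ appearing in the claimed formula.

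First I would record how the tree Laplacian acts on horocyclic averages. Writing $g_m=f(C^{v-w}_m)$, a direct count on the cone (each $u\in C^{v-w}_m$ with $m\ge 1$ has one neighbour in $C^{v-w}_{m-1}$ and $q$ neighbours in $C^{v-w}_{m+1}$, while $C^{v-w}_m$ has $q^m$ vertices) gives, for all $m\ge 1$,
$$\triangle f(C^{v-w}_m)=\frac{q}{q+1}\,g_{m+1}-g_m+\frac{1}{q+1}\,g_{m-1}=:(\mathcal L g)_m,$$
which defines a fixed tridiagonal constant-coefficient operator $\mathcal L$ on sequences. Applying this with $f$ replaced by $\triangle^{k-1}f$ and inducting on $k$, I obtain the key identity
$$\triangle^k f(C^{v-w}_n)=(\mathcal L^k g)_n\qquad\text{for all }n\ge k,$$
the restriction $n\ge k$ being exactly what keeps every intermediate application of $\mathcal L$ at an index $\ge 1$, that is, away from the cone vertex $v$ where the three-term recurrence fails.

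Second, I would set up the functional calculus. Map a Laurent polynomial $p(x)$ to the operator $p(X)$ on sequences, where $X$ is the shift $(Xg)_m=g_{m-1}$; this is an algebra homomorphism and $X$ is invertible. A one-line check shows that $S_1(x)=\frac{x}{q+1}-1+\frac{q}{(q+1)x}$ maps to precisely $\mathcal L$; since $S_k=S_1^{\,k}$ (because $S_1S_k=S_{k+1}$) one gets $S_k\mapsto\mathcal L^k$, while $D_n=x^n+q^nx^{-n}\mapsto X^n+q^nX^{-n}$. Feeding the Laurent identity $D_n=\sum_{k=0}^n a_{n,k}S_k$ through the homomorphism therefore yields the operator identity
$$X^n+q^nX^{-n}=\sum_{k=0}^n a_{n,k}\,\mathcal L^k.$$

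Finally, I would apply both sides to $g$ and evaluate at index $m=n$. The left side gives $g_0+q^ng_{2n}=f(v)+q^nf(C^{v-w}_{2n})$, where only the nonnegative indices $0$ and $2n$ occur, so no values of $g$ at negative indices are ever needed. The right side gives $\sum_{k=0}^n a_{n,k}(\mathcal L^k g)_n$, and since each summand uses only indices in $[n-k,n+k]\subseteq[0,2n]$ with $k\le n$, the identity of the second paragraph applies and rewrites it as $\sum_{k=0}^n a_{n,k}\triangle^k f(C^{v-w}_n)$. Substituting $a_{n,k}=(q+1)^k\big(\gamma_k(n)+q^n\gamma_k(-n)\big)$ from Theorem~\ref{polynomials} produces exactly the asserted formula. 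I expect the only genuinely delicate point to be the bookkeeping behind $n\ge k$: one must verify that unrolling $\triangle^k$ never reaches the vertex $v$, where the clean recurrence for $\mathcal L$ breaks down; once that is secured, the remainder is a formal consequence of the homomorphism together with Proposition~\ref{uniqueness_SD} and Theorem~\ref{polynomials}.
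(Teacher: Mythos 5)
Your proof is correct and follows essentially the same route as the paper: both transport the Laurent identity $D_n=\sum_{k=0}^n a_{n,k}S_k$ of Proposition~\ref{uniqueness_SD} through the substitution sending powers of $x$ to shifts of the sequence $m\mapsto f(C^{v-w}_m)$, identify $S_k$ with $\triangle^k$ acting on horocyclic averages, and invoke Theorem~\ref{polynomials} for the coefficients $a_{n,k}=(q+1)^k\big(\gamma_k(n)+q^n\gamma_k(-n)\big)$. The only difference is that you make explicit the three-term recurrence for $\triangle$ on horocyclic averages and the bookkeeping $n\ge k$ needed away from the cone vertex, which the paper compresses into the unproved assertion $\hat S_{k,n}=\triangle^k f(C^{v-w}_n)$.
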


\begin{proof}
For $0<k\le n$ set
$$
\begin{array}{l}
\hat D_{k,n}=f(C^{v-w}_{n-k})+q^kf(C^{v-w}_{n+k}),\\
\hat S_{k,n}=\sum\limits_{i=-k}^kc_{i,k}f(C^{v-w}_{n+i}),
\end{array}
$$
where the coefficients $c_{i,k}$ are generated by
$$
S_k=\frac{\big((x-1)(x-q)\big)^k}{(q+1)^kx^k}=\sum\limits_{i=-k}^k c_{i,k}x^i.
$$
Notice that all linear expressions over $S_k$ and $D_k$ are identically translated to the linear expressions over $\hat S_{k,n}$ and $\hat D_{k,n}$.
Then from Proposition~\ref{uniqueness_SD} it follows
$$
f(v)+q^nf(C^{v-w}_{2n})=\hat D_{n,n}=\sum\limits_{k=0}^n a_{n,k}\hat S_{k,n},
$$
where the coefficients $a_{n,k}$ as in Theorem~\ref{polynomials}, i.e.,
$$
a_{n,k}=(q+1)^k(\gamma_k(n)+q^n\gamma_k(-n)),
$$
where the coefficients of $\gamma_k$ are defined by System~$($\ref{systemA}$)$.
In addition note that
$$
\hat S_{k,n}=\triangle^k(C_n^{v-w}).
$$
Therefore, we obtain
$$
f(v)+q^nf(C^{v-w}_{2n})=\sum\limits_{k=0}^{n}\Big((q+1)^k\big(\gamma_k(n)+q^n\gamma_k(-n)\big)\triangle^k f(C^{v-w}_{n})\Big).
$$
This concludes the proof.
\end{proof}

\vspace{2mm}

{\noindent
{\it Proof of Theorem~B.} From Lemma~\ref{expression} we have
$$
f(v)+q^nf(C^{v-w}_{2n})=\sum\limits_{i=0}^{n}\Big((q+1)^i\big(\gamma_i(n)+q^n\gamma_i(-n)\big)\triangle^i f(C^{v-w}_{n})\Big).
$$
Hence,
}
$$
\begin{array}{l}
\displaystyle
\int\limits_{\partial C^{v-w}}
\left[
\sum\limits_{i=0}^{\infty}\Bigg((q+1)^i\Big(\gamma_i(\infty)+q^\infty\gamma_i(-\infty)\Big)\triangle^i f(t)\Bigg)\right]
dt
\\
\displaystyle
\qquad \qquad =
\lim\limits_{n\to\infty}\sum\limits_{i=0}^{n}\Bigg((q+1)^i\Big(\gamma_i(n)+q^n\gamma_i(-n)\Big)\triangle^i f(C^{v-w}_{n})\Bigg)
\\
\qquad \qquad =
\lim\limits_{n\to\infty}\big(f(v)+q^nf(C^{v-w}_{2n})\big)
=f(v)+\lim\limits_{n\to\infty}\big(q^nf(C^{v-w}_{2n})\big)
\\
\qquad \qquad
=f(v).
\end{array}
$$
Therefore, the integral converges to $f(v)$ if and only if the sequence $\big(q^nf(C^{v-w}_{2n})\big)$ converges to zero
as $n$ tends to infinity. This means that $f$ is $C^{v-w}$-horosumable.
This concludes the proof.
\qed

\vspace{2mm}

{\noindent
{\bf Acknowledgements.}
The authors are grateful to Prof.~Woess for constant attention to this work.
This first author is supported by the Austrian Science Fund (FWF):
W1230, Doctoral Program ``Discrete Mathematics''.
}

\bibliographystyle{plain}

\vspace{.5cm}
\end{document}